\newcommand{\CM}{Cohen-Macaulay}
\newcommand{\ff}{\text{if and only if}}
\newcommand{\wrt}{with respect to}
\newcommand{\X}{\mathbf{X} }
\newcommand{\C}{\mathbf{C}^\bullet }
\newcommand{\D}{\mathbf{D}_\bullet }
\newcommand{\K}{\mathbb{K}_\bullet }
\newcommand{\Y}{\mathbf{Y}_\bullet }
\newcommand{\m}{\mathfrak{m} }
\newcommand{\q}{\mathfrak{q} }
\newcommand{\af}{\mathfrak{a} }
\newcommand{\bbf}{\mathfrak{b}}
\newcommand{\R}{\mathcal{R}(I) }
\newcommand{\Rc}{\mathcal{R} }
\newcommand{\eR}{\widehat{\mathcal{R}}(I) }
\newcommand{\Z}{\mathbb{Z} }
\newcommand{\Vc}{\mathcal{V} }
\newcommand{\pr}{\prime}
\newcommand{\rt}{\rightarrow}
\newcommand{\xar}{\longrightarrow}
\newcommand{\ov}{\overline}
\newcommand{\bx}{\mathbf{x}}
\newcommand{\bu}{\mathbf{u}}
\newcommand{\core}{\operatorname{core}}
\newcommand{\depth}{\operatorname{depth}}
\newcommand{\ann}{\operatorname{ann}}
\newcommand{\Tot}{\operatorname{Tot}}
\theoremstyle{plain}
\newtheorem{theorem}{Theorem}[section]
\newtheorem{corollary}[theorem]{Corollary}
\newtheorem{lemma}[theorem]{Lemma}
\newtheorem{proposition}[theorem]{Proposition}
\newtheorem{question}[theorem]{Question}
\theoremstyle{definition}
\newtheorem{remark}[theorem]{Remark}
\theoremstyle{remark}
\numberwithin{equation}{theorem}
\begin{document}

\title{On the intersection of Annihilator of the Valabrega-Valla module }
 \author{Tony~J.~Puthenpurakal}
\date{\today}
\address{Department of Mathematics, Indian Institute of Technology Bombay, Powai, Mumbai 400 076}

\email{tputhen@math.iitb.ac.in}

\thanks{While writing this  paper the  author was a visitor at University of Kentucky, under a fellowship from DST, India. The author thanks both DST and UK for its support}

\subjclass{Primary 13A30; Secondary 13D40, 13D45}
\keywords{blow-up algebras, multiplicity theory, core}

\begin{abstract}
Let $(A,\m)$ be a \CM \ local ring with an infinite residue field and let $I$ be an $\m$-primary ideal. 
Let $\bx  = x_1,\ldots,x_r$ be a $A$-superficial sequence \wrt \ $I$. Set
$$\Vc_I(\bx) = \bigoplus_{n\geq 1} \frac{I^{n+1}\cap (\bx) }{\bx I^n}. $$
A consequence of a theorem due to Valabrega and Valla is that $\Vc_I(\bx) = 0$ \ff \ the initial forms $x_1^*,\ldots,x_r^*$ is a $G_I(A)$ regular sequence. Furthermore this holds if and only if  $\depth G_I(A) \geq r$. We show that if $\depth G_I(A) < r$  then
\[
\af_r(I)= \bigcap_{\substack{\text{$\bx  = x_1,\ldots,x_r$ is a } \\ \text{ $A$-superficial sequence w.r.t $I$}}} \ann_A \Vc_I(\bx) \quad \ \text{is} \ \m\text{-primary}.
\]
Suprisingly we also prove that under the same hypotheses,
\[
\bigcap_{n\geq 1} \af_r(I^n) \quad \ \text{is also} \ \m\text{-primary}.
\]
\end{abstract}

\maketitle

\section*{Introduction}
Let $(A,\m)$ be a Noetherian local ring with an infinite residue field.
The notion of minimal reduction of an ideal $I$ in  $A$ was discovered more than fifty years ago by Northcott and Rees; \cite{Northcott-Rees}. It plays an essential role in the study of blow-up algebra's. Nevertheless minimal reductions are highly non-unique.  The intersection of all minimal reductions is named as \textit{core} of $I$ and denoted by
$\core(I)$. This was introduced by Rees and Sally in \cite{Rees-Sally}. It has been extensively investigated in

 \cite{CPU-Ann},\cite{CPU-TAMS} and \cite{Hun-Trung}. When $A$ is \CM \ and $I$ is $\m$-primary; Rees and Sally proved that $\core(I)$ is again $\m$-primary and so is a finite intersection.
In this paper we study a different intersection of ideals.

 Let $(A,\m)$ be a \CM \ local ring of dimension $d$ with an infinite residue field and let $I$ be an $\m$-primary ideal. Let $\bx  = x_1,\ldots,x_r$ be a $A$-superficial sequence \wrt \ $I$. Set
$$\Vc_I(\bx) = \bigoplus_{n\geq 1} \frac{I^{n+1}\cap (\bx) }{\bx I^n}. $$
We call $\Vc_I(\bx)$ the  Valabrega-Valla module of $I$ \wrt \ $\bx$. 
 A consequence of a theorem due to Valabrega and Valla, \cite[2.3]{VV} is that $\Vc_I(\bx) = 0$ \ff \  the initial forms $x_1^*,\ldots,x_r^*$ is a $G_I(A)$ regular sequence. Furthermore this holds if and only if  $\depth G_I(A) \geq r$, see \cite[2.1]{HM}. In general notice each $\Vc_I(\bx)$ has finite length and so $\ann_A \Vc_I(\bx)$ is $\m$-primary. We prove, see Theorem \ref{main},
 that
 \[
 \af_r(I) = \bigcap_{\substack{\text{$\bx  = x_1,\ldots,x_r$ is a } \\ \text{ $A$-superficial sequence w.r.t $I$}}} \ann_A \Vc_I(\bx) \quad \ \text{is} \ \m\text{-primary}.
\]

Our intersection of ideals  is in some sense analogous to that of core of $I$; since
notice that
$$ \core(I) = \bigcap_{\substack{\text{$J$ minimal } \\ \text{ reduction of $I$}}} \ann_A  \frac{A}{J}. $$
Nevertheless they are two different invariants of $I$. Furthermore our techniques are totally different from that in the papers listed above.

By a result of Elias $\depth G_{I^n}(A)$ is constant for all $n \gg 0$, see \cite[2.2]{Elias}.
Since $\core(I) \subseteq I$ we have $\bigcap_{n\geq 1} \core(I^n) = 0$. Suprisingly, see Theorem \ref{main-2}, we have 
that if $\depth G_I(A) < r$ then
\[
\bigcap_{n\geq 1} \af_r(I^n) \quad \ \text{is} \ \m\text{-primary}.
\]

We now assume $A$ is also complete. Let $\R = \bigoplus_{n\geq 0}I^n$ be the Rees algebra of $I$. Set $L = L^I(A)= \bigoplus_{n\geq 0}A/I^{n+1}$. It can be shown easily that $L$ is a $\R$-module.
Of course $L$ is not finitely generated as a $\R$-module. Nevertheless we prove that its local cohomology modules $H^i_{\R_+}(L)$ are *-Artinian for $i = 0,\ldots, d-1$; see Theorem \ref{locArt}. Recall a graded $\R$-module $N$ is said to be $*$-Artininan if it satisfies d.c.c on its graded submodules. Set $\bbf_i(I) = \ann_{\R} H^i_{\R_+}(L)$  for $i = 0,\ldots, d-1$ and set $\q_i(I) = \bbf_i(I) \cap A$. Since $H^i_{\R_+}(L)$ is *-Artinian; it is not
 so difficult to show that $\q_i$ is $\m$-primary (or equal to $A$); see Corollary \ref{loc-CA}.

In Theorem \ref{man}  we prove that
\[
\af_r(I) \supseteq   \q_0(I)\q_1(I)\cdots\q_{r-1}(I).
\]

Next note that $L^I(A)(-1)$ behaves well  with respect to the Veronese functor. Clearly
\[
\left(L^I(A)(-1)\right)^{<l>}  = L^{I^l}(A)(-1) \quad \text{for each} \ l \geq 1.
\]
Also  local cohomolgy commutes with the Veronese functor. As a consequence we have
\[
\q_i(I^l) \supseteq \q_i(I) \quad \text{for each} \ l \geq 1 \ \text{and} \ i = 0,1,\ldots,r-1.
\]
It follows that 
\[
\bigcap_{n \geq 1}\af_r(I^n) \supseteq   \q_0(I)\q_1(I)\cdots\q_{r-1}(I).
\]

The $\R$-module $L^I(A)$ is not finitely generated $\R$-module. However it is quasi-finite $\R$-module, see section 1.5.
 Quasi-finite module were introduced in
\cite[page 10]{HPV}.   Surprisingly we were able to prove that if $E$ is a quasi-finite $\R$-module and has  a filter-regular sequence of length $s$ then
the local cohomology modules $H^{i}_{\R_+}(E)$ are all *-Artinian for $i = 0,\ldots, s-1$.

We also study the Koszul homology of a quasi-finite module \wrt \ a filter regular sequence. We then use a spectral sequence, first used
by P. Roberts \cite[Theorem 1]{Roberts}, to relate cohomological annihilators with that of annihilators of the Koszul complex. We however have to very careful in our proof since we are dealing with infinitely generated modules.

We now describe in brief the contents of this paper. In section 1 we introduce notation
and discuss a few preliminary facts that we need. In section 2 we study a few basic properties of $L^I(M)$. In section 3 we prove some properties of Koszul homology of quasi-finite modules  with respect to filter-regular sequence. We also compute $H_1(\bu, L^I(M))$ where  $\bu = x_1t,\ldots,x_rt \in \R_1$ is a $L^I(M)$-filter regular sequence. In section 4 we study local cohomology of quasi-finite modules $E$ \\ with
            $\ell(E_n)$ finite for all $n \in \Z$. In section 5 we prove that $\af_r(I)$ is $\m$-primary (or $A$).
In section 6 we show that  $\bigcap_{n\geq 1} \af_r(I^n)$ is $\m$-primary (or $A$).

\section{Notation and Preliminaries}
Throughout we assume that $(A,\m)$ is a Noetherian local ring with an infinite residue field $k =A/\m$. Let $M$ be a finitely generated \ $A$-module of dimension $r$ and let $I$ be an ideal of definition for $M$; i.e, $\ell(M/IM)$ is finite. Here $\ell(-)$ denotes length. For undefined terms see
\cite{BH}, especially sections 4.5  and 4.6.

\s Assume $r = \dim M \geq 1$. Let $x \in I\setminus I^2$.  We say $x$ is $M$-\emph{superficial} \wrt \ $I$  if for some  $c\geq 1$ we have
$(I^{n+1}M \colon x )\cap I^cM = I^nM$
 for all $n \gg 0$. If $\depth M > 0$ then using the Artin-Rees Lemma one can  prove that $(I^{n+1}M \colon x ) = I^nM$
 for all $n \gg 0$.

  Superficial sequences can be defined as usual. Since $k$ is infinite $M$-superficial sequences of length $ r = \dim M$ exists.

\s Let $\bx = x_1,\ldots, x_r$ be a $M$-superficial sequence  \wrt \ $I$. The Valabrega-Valla module of $I$ \wrt \ $M$ and $\bx$ is
\[
\Vc_I(\bx, M) = \bigoplus_{n\geq 1} \frac{I^{n+1} M\cap \bx M}{\bx I^nM}.
\]
We consider it as a $A$-module.
Set $\Vc_I(\bx) = \Vc_I(\bx, A)$.

\s Let $\eR = \bigoplus_{n \in \Z}I^nt^n$ denote the extended Rees-algebra of $A$ \wrt \ $I$. Here $I^n = A$ for $n \leq 0$. We consider it as a subring of $A[t,t^{-1}]$.
Let $\R = \bigoplus_{n \geq 0 }I^nt^n$ denote the  Rees-algebra of $A$ \wrt \ $I$. We consider it as a subring of $A[t]$. Of course
we can consider $\R$ as a subring of $\eR$ too. Both these embedding's of $\R$ would be useful for us.
 Set
\[
\eR_M = \bigoplus_{n \in \Z}I^nMt^n  \quad \text{and} \quad    \R_M = \bigoplus_{n \geq 0}I^n Mt^n.
\]
We call $\eR_M$
the extended Rees module of $M$ \wrt \ $I$ and we call $\R_M$ to be the Rees module of $M$ \wrt \ $I$.

\s Consider $L^I(M) = \bigoplus_{n \geq 0}M/I^{n+1}M$. We consider $L^I(M)$ as a $\eR$-module as follows:

Consider the exact sequence
\[
0 \xar \eR_M \xar M[t,t^{-1}] \xar L^I(M)(-1) \xar 0.
\]
Here $M[t,t^{-1}] = M \otimes_A A[t,t^{-1}]$. This exact sequence gives $L^I(M)$ a structure of $\eR$-module.
Since $\R$ is a subring of $\eR$; we also get that $L^I(M)$ is a $\R$-module. We may also see this directly
through the exact sequence
\[
0 \xar \R_M \xar M[t] \xar L^I(M)(-1) \xar 0
\]
\s \label{quasi-defn}\textbf{Quasi-finite modules}
It will be convenient at times to work a little more generally.  We extend definition of quasi-finite modules from that of  \cite[page 10]{HPV}.
Let $E = \bigoplus_{n\in \Z}E_n$ be a $\R$-module. We say $E$ is \emph{quasi-finite of order at least $s$} if
 \begin{enumerate}
   \item $E_n$ is a finitely generated $A$-module for all $n \in \Z$
   \item $E_n = 0$ for all $n \ll 0$.
   \item For $i = 0,\ldots, s-1$ we have $H^i_{\R_+}(E)_n = 0 $ for all $n \gg 0$.
 \end{enumerate}
\begin{remark}
Of course if $E$ is a finitely generated $\R$-module then it is quasi-finite of any order $s \geq 1$. In the next section we prove that if $M$ is \CM \ of dimension $r\geq 1$ and $I$ is an ideal of definition for $M$ then $L^I(M)$ is quasi-finite of order at-least  $r$.
\end{remark}

\s Let $E = \bigoplus_{n\in \Z}E_n$ be a non-necessarily finitely generated $\R$-module with $E_n = 0$ for all $n \ll 0$. An element $u \in \R_1$ is said to be $E$-\emph{filter regular } if
$(0 \colon_E u)_n = 0$ for all $n \gg 0$.

\begin{remark}
If $E$ is quasi-finite of order at-least $s (\geq 2) $ and $u$ is $E$-filter regular then $E/uE$ is quasi-finite of order at-least $s-1$.
This can be proved by noting that $(0\colon_E u)$ is  $\R_+$-torsion.
\end{remark}

\s Let $E = \bigoplus_{n\in \Z}E_n$ be a quasi-finite  $\R$-module of order at-least $s$. Let  $\bu = u_1,\ldots,u_r \in \R_1$ be a sequence and assume $r \leq s$.
We say $\bu$ is a $E$-filter regular sequence if $u_1$ is $E$-filter regular, $u_2$ is $E/u_1E$-filter-regular, \ldots, $u_r$ is $E/(u_1,\ldots,u_{r-1})E$ filter-regular.

\begin{proposition}
Assume that the residue field of $A$ is uncountable. Let $E$ be a quasi-finite $\R$-module of order at least $s$. Then there exists
$\bu = u_1,\ldots,u_s \in \R_1$ which is $E$-filter   regular  sequence.
\end{proposition}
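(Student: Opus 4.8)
The plan is to produce a single filter regular element and then iterate. Recall the remark above: if $E$ is quasi-finite of order at least $t\ge 2$ and $u\in\R_1$ is $E$-filter regular, then $E/uE$ is quasi-finite of order at least $t-1$. So it is enough to prove: \emph{if $F$ is any quasi-finite $\R$-module (of order at least $1$), then $\R_1$ contains an $F$-filter regular element.} Granting this, put $E^{(0)}=E$; if $u_1,\dots,u_i\in\R_1$ have been chosen, with $i<s$, so that $E^{(i)}:=E/(u_1,\dots,u_i)E$ is quasi-finite of order at least $s-i$, then (since $s-i\ge 1$) choose $u_{i+1}\in\R_1$ which is $E^{(i)}$-filter regular, and note that when $s-i\ge 2$ the remark shows $E^{(i+1)}$ is quasi-finite of order at least $s-(i+1)$. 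Running $i$ from $0$ to $s-1$ yields $\bu=u_1,\dots,u_s\in\R_1$, which is by construction an $E$-filter regular sequence.

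For the single-element assertion I would first record the characterization: \emph{for $u\in\R_1$, $u$ is $F$-filter regular if and only if $u\notin P$ for every $P\in\Lambda(F):=\operatorname{Ass}_{\R}(F)\setminus V(\R_+)$.} Indeed, if $u$ lies in no such $P$ then, since $u\cdot(0:_F u)=0$, we have $u\in\ann_{\R}(0:_F u)$, so every associated prime of $(0:_F u)$ contains $u$; being also an associated prime of $F$, such a prime cannot lie in $\Lambda(F)$, hence it contains $\R_+$. Therefore $(0:_F u)$ is $\R_+$-torsion and so is contained in $H^0_{\R_+}(F)$, which vanishes in large degrees because $F$ is quasi-finite of order at least $1$; thus $(0:_F u)_n=0$ for $n\gg 0$. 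Conversely, if $u\in P=\ann_{\R}(e)$ for a homogeneous $e\in F_n$ with $\R_+\not\subseteq P$, pick $f\in\R_1$ with $f\notin P$ (possible since $\R_+=\R_1\R\not\subseteq P$); then $f^m e\ne 0$ and $u\cdot f^m e=0$ for all $m\ge 0$, so $(0:_F u)$ is nonzero in arbitrarily large degrees and $u$ is not $F$-filter regular.

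The crux is then to show that $\operatorname{Ass}_{\R}(F)$ — hence $\Lambda(F)$ — is countable. Here $\R=A[It]$ is a finitely generated $A$-algebra, hence Noetherian, and $F$ is graded, so every $P\in\operatorname{Ass}_{\R}(F)$ is of the form $\ann_{\R}(e)$ for some homogeneous $e\in F_n$, and then $\R e\cong\R/P$ up to a shift. Since $F_n$ is a finitely generated $A$-module, the graded $\R$-submodule $G^{(n)}\sub F$ generated by $F_n$ is a finitely generated $\R$-module; as $\R e\sub G^{(n)}$, we get $P\in\operatorname{Ass}_{\R}(G^{(n)})$, which is a finite set. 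Because $F_n=0$ for $n\ll 0$, this gives $\operatorname{Ass}_{\R}(F)\sub\bigcup_{n}\operatorname{Ass}_{\R}(G^{(n)})$, a countable union of finite sets. I expect this countability step — making up for the fact that $F$ is not finitely generated, so that $\operatorname{Ass}_{\R}(F)$ genuinely can be infinite — to be the main point of the argument.

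To finish, write $\Lambda(F)=\{P_1,P_2,\dots\}$. Since $\R$ is standard graded, $\R_1\not\subseteq P_j$ for every $j$, and as $\R_1$ is a finitely generated $A$-module, Nakayama's lemma shows that the image $\ov{P}_j$ of $P_j\cap\R_1$ in the finite-dimensional $k$-vector space $V:=\R_1/\m\R_1$ is a proper subspace. A finite-dimensional vector space over an uncountable field is not a countable union of proper subspaces, so there is $\ov u\in V\setminus\bigcup_j\ov{P}_j$; any lift $u\in\R_1$ satisfies $u\notin P_j$ for all $j$ and hence is $F$-filter regular by the characterization above. This is the only place where uncountability of $k$ is used.
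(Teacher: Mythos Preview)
Your proof is correct and follows the same reduction as the paper: one shows it suffices to treat $s=1$ and then iterates using the remark that $E/uE$ is quasi-finite of order at least $s-1$. For the case $s=1$ the paper simply cites \cite[2.7]{HPV}, whereas you have written out the underlying prime-avoidance argument (countability of $\operatorname{Ass}_{\R}(F)$ via the finitely generated submodules $G^{(n)}$, and avoidance of countably many proper subspaces of $\R_1/\m\R_1$ over the uncountable field $k$); this is precisely the content of that reference.
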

\begin{proof}
It is sufficient to do this for $s =1$. In this case the result follows from \cite[2.7]{HPV}
\end{proof}
\begin{remark} Assume $M$ is \CM.
Let $\bx = x_1,\ldots,x_r$ be a $M$-superficial sequence \wrt \ $I$.  Set $u_i = x_it \in \R_1$ for $i = 1,\ldots,r$. In the next section we show that
$\bu = u_1,\ldots,u_r$ is a $L^I(M)$ filter-regular sequence. We do not need the residue field of $A$ to be uncountable.
\end{remark}
\section{$L^I(M)$}
\s\label{setup-2} \textbf{Setup and Introduction:} In this section  $M$ is a\emph{ \CM }\ $A$-module of dimension $r \geq 1$ and  $I$ is an ideal of definition for $M$. We consider the $\eR$-module
$L^I(M) = \bigoplus_{n\geq 0}M/I^{n+1}M$.
We prove that $L^I(M)$ is a quasi-finite $\R$-module of order at least $r$.  Let $\bx = x_1,\ldots,x_r$ be a $M$-superficial sequence \wrt \ $I$.  Set $u_i = x_it \in \R_1$ for $i = 1,\ldots,r$. We also show that
$\bu = u_1,\ldots,u_r$ is a $L^I(M)$ filter-regular sequence.

\s\label{indep} If $E$ is a graded $\eR$-module then notice that
\[
H^i_{\R_+}(E) \cong H^i_{\eR_+}(E) \quad \text{as $\R$-modules}.
\]
Note that $\eR_+$ denotes the ideal $\R_+ \eR$ of $\eR$.
The following result is known when $M = A$; see \cite[3.8]{Blancafort}.
\begin{lemma}\label{eRlc}
[with hypotheses as in \ref{setup-2}] As $\R$-modules:
\begin{enumerate}[\rm (1)]
  \item $H^1_{\eR_+}(\eR_M)$ is a quotient of $H^1_{\R_+}(\R_M)$.
  \item $H^i_{\eR_+}(\eR_M)  \cong H^i_{\R_+}(\R_M)$ for $i \geq 2$.
\end{enumerate}
\end{lemma}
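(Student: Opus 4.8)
The plan is to place $\eR_M$ and $\R_M$ at the two ends of a short exact sequence of graded $\R$-modules whose middle--quotient term is $\R_+$-torsion, then run the long exact sequence of local cohomology, and finally invoke \ref{indep} to switch from $\R_+$ to $\eR_+$. Concretely, since $\R$ is concentrated in non-negative degrees, $\R_M = \bigoplus_{n\geq 0}I^nMt^n$ is a graded $\R$-submodule of $\eR_M = \bigoplus_{n\in\Z}I^nMt^n$, and the quotient
\[
Q := \eR_M/\R_M \cong \bigoplus_{n\geq 1} Mt^{-n}
\]
is a graded $\R$-module concentrated in negative degrees. (Note the \CM \ hypothesis of \ref{setup-2} plays no role in this lemma; only that $I$ is an ideal of definition for $M$, and the argument in fact works for an arbitrary finitely generated $M$.)

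First I would check that $Q$ is $\R_+$-torsion. Because $\R = A[It]$ is generated over $A$ in degree $1$, we have $\R_+ = \R_1\R$ and hence $\R_+^k = \R_k\R$ for every $k\geq 1$. For a homogeneous element $z \in Q_{-n}$ with $n\geq 1$, the submodule $\R_k z$ lies in degree $k-n$, which is $\geq 0$ as soon as $k \geq n$; since $Q$ vanishes in non-negative degrees, $\R_+^n z = 0$. As a general element of $Q$ is a finite sum of homogeneous ones, $Q$ is $\R_+$-torsion, and consequently (working over the Noetherian ring $\R$) $H^0_{\R_+}(Q) = Q$ and $H^i_{\R_+}(Q) = 0$ for all $i\geq 1$.

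Then I would feed $0 \xar \R_M \xar \eR_M \xar Q \xar 0$ into the long exact sequence of the right derived functors of $\Gamma_{\R_+}$:
\[
\cdots \xar H^{i-1}_{\R_+}(Q) \xar H^i_{\R_+}(\R_M) \xar H^i_{\R_+}(\eR_M) \xar H^i_{\R_+}(Q) \xar \cdots.
\]
For $i\geq 2$ both flanking terms vanish, so $H^i_{\R_+}(\R_M) \cong H^i_{\R_+}(\eR_M)$; for $i=1$ the piece $Q = H^0_{\R_+}(Q) \to H^1_{\R_+}(\R_M) \to H^1_{\R_+}(\eR_M) \to 0$ exhibits $H^1_{\R_+}(\eR_M)$ as a quotient of $H^1_{\R_+}(\R_M)$. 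Finally, since $\eR_M$ is a graded $\eR$-module, \ref{indep} gives $H^i_{\R_+}(\eR_M) \cong H^i_{\eR_+}(\eR_M)$ as $\R$-modules for every $i$, and substituting this into the two displays above yields assertions (1) and (2). This is the same mechanism as in \cite[3.8]{Blancafort}, which treats $M = A$.

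I do not expect a serious obstacle here; the one point requiring care is that $\eR_M$, $\R_M$ and $Q$ are not finitely generated $\R$-modules, so I would make sure the tools used are insensitive to this: the long exact sequence above is valid for arbitrary modules, the vanishing $H^i_{\R_+}(Q)=0$ for $i\geq 1$ uses only that $Q$ is torsion over a Noetherian ring, and the isomorphism of \ref{indep} is already stated for arbitrary graded $\eR$-modules. The crux is simply recognizing the torsion quotient $Q$.
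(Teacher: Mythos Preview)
Your proof is correct and follows exactly the approach sketched in the paper: the short exact sequence $0 \to \R_M \to \eR_M \to \eR_M/\R_M \to 0$ with $\R_+$-torsion quotient, the long exact sequence in $H^*_{\R_+}$, and the identification \ref{indep}. You have simply supplied the details the paper omits.
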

\begin{proof}(Sketch)
We use \ref{indep} and the following short exact sequence of $\R$-modules
\[
0\xar \R_M \xar \eR_M \xar \eR_M/\R_M \xar 0.
\]
Notice $\eR_M/\R_M$ is $\R_+$-torsion.
\end{proof}

\begin{proposition}
$L^I(M)$ is quasi-finite of order $r = \dim M$.
\end{proposition}
\begin{proof}
Set $L = L^I(M)$.  Notice $H^i_{\R_+}(L) = H^i_{\eR_+}(L)$ as $\R$-modules.  Let $\bx = x_1,\ldots,x_r$ be a $M$-superficial sequence \wrt \ $I$.  Set $u_i = x_it \in \R_1$ for $i = 1,\ldots,r$.

Let $\bx = x_1,\ldots,x_r$ be a $M$-superficial sequence \wrt \ $I$.  Set $u_i = x_it \in \R_1$ for $i = 1,\ldots,r$.
It can be easily checked that $\bu$ is a $M[t,t^{-1}]$ regular sequence. So $H^i_{\eR_+}(M[t, t^{-1}]) = 0$ for $i = 0,\ldots, r-1$.

 We consider the exact sequence
\[
0 \xar \eR_M \xar M[t,t^{-1}] \xar L(-1) \xar 0.
\]
Taking local cohomology \wrt \ $\eR_+$ we get that

(a)  $H^i_{\eR_+}(L(-1)) \cong H^{i+1}_{\eR_+}(\eR_M)$ for $i = 0,\ldots, r-2$. \\
(b)  $H^{r-1}_{\eR_+}(L(-1))$ is a submodule of $H^r_{\eR_+}(\eR_M)$.

The result now follows from Lemma \ref{eRlc}, Remark \ref{indep} and \cite[15.1.5]{BSh}.
\end{proof}

\begin{proposition}\label{supTOfilter}
Let $\bx = x_1,\ldots,x_r$ be a $M$-superficial sequence \wrt \ $I$.  Set $u_i = x_it \in \R_1$ for $i = 1,\ldots,r$.
Then $\bu $ is a $L^I(M)$ filter-regular sequence.
\end{proposition}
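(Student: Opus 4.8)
The plan is to induct on $r=\dim M$. The base case is that $u_1=x_1t$ is $L^I(M)$-filter regular, and the inductive step rests on identifying $L^I(M)/u_1L^I(M)$ with $L^I(M/x_1M)$, a module of dimension $r-1$, so that the inductive hypothesis applies.

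First I would check that $u_1$ is $L^I(M)$-filter regular. Writing $L=L^I(M)$, the element $u_1\in\R_1$ acts on $L_n=M/I^{n+1}M$ by $\ov m\mapsto \ov{x_1m}\in M/I^{n+2}M$, so $(0\colon_L u_1)_n=(I^{n+2}M\colon_M x_1)/I^{n+1}M$. Since $M$ is \CM \ of dimension $r\ge 1$, $\depth M>0$, and the Artin--Rees consequence of superficiality recalled in Section 1 gives $(I^{n+2}M\colon_M x_1)=I^{n+1}M$ for $n\gg 0$; hence $(0\colon_L u_1)_n=0$ for $n\gg 0$.

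Next I would prove $L^I(M)/u_1L^I(M)\cong L^I(M/x_1M)$ as graded $\R$-modules. Put $\ov M=M/x_1M$. There is an evident degree-preserving $\R$-linear surjection $\pi\colon L^I(M)\to L^I(\ov M)$ which in degree $n$ is the natural map $M/I^{n+1}M\to M/(x_1M+I^{n+1}M)$, with kernel $(x_1M+I^{n+1}M)/I^{n+1}M$. But $(u_1L)_n=u_1\cdot L_{n-1}$ equals precisely $(x_1M+I^{n+1}M)/I^{n+1}M$ for $n\ge 1$, while in degree $0$ we have $(u_1L)_0=0$ because $L_{-1}=0$, and $\ker\pi_0=0$ because $x_1M\sub IM$ (as $x_1\in I$). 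Thus $\ker\pi=u_1L$ and the isomorphism follows; one should verify carefully that $\pi$ is $\R$-linear, not merely $A$-linear.

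Finally I would close the induction. If $r=1$, the first step finishes it. If $r\ge 2$, then $x_1$ is $M$-superficial with $\depth M>0$, hence a nonzerodivisor on $M$, so $\ov M$ is \CM \ of dimension $r-1$, and by the definition of a superficial sequence $x_2,\ldots,x_r$ is an $\ov M$-superficial sequence \wrt \ $I$ with associated $u_i=x_it$. The inductive hypothesis says $u_2,\ldots,u_r$ is an $L^I(\ov M)$-filter regular sequence, which by the previous step is the same as an $L^I(M)/u_1L^I(M)$-filter regular sequence; together with the first step this shows $\bu$ is an $L^I(M)$-filter regular sequence. (That each quotient in sight is quasi-finite of the order needed for this language to make sense follows from the Remark after the definition of a filter-regular element together with the preceding Proposition.) The one delicate point is the identification $\ker\pi=u_1L^I(M)$ in low degrees — especially degree $0$, where one genuinely needs $x_1\in I$ — and the verification that $\pi$ respects the $\R$-structure.
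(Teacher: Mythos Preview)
Your proof is correct and follows essentially the same approach as the paper: compute $(0\colon_{L^I(M)} u_1)$ to see $u_1$ is filter regular via superficiality, identify $L^I(M)/u_1L^I(M)\cong L^I(M/x_1M)$, and induct on $\dim M$. Your write-up is in fact more careful than the paper's (you check degree $0$ and $\R$-linearity explicitly), and your indexing $(0\colon_L u_1)_n=(I^{n+2}M\colon_M x_1)/I^{n+1}M$ is the correct one.
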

\begin{proof}
Set $L = L^I(M)$.
We first show that $u_1$ is $L$ filter regular.
Notice
\[
(0 \colon_L u_1) = \bigoplus_{n \geq 0} \frac{I^{n+1}M \colon_M x_1}{I^nM}.
\]
Since $x_1$ is $M$-superficial it follows that $u_1$ is $L$ filter regular; see 1.1.

Check that
\[
\frac{L}{u_1 L} = \bigoplus_{n \geq 0}\frac{M}{x_1M + I^{n+1}M}  = L^I(M/x_1M).
\]
The result now follows from an easy induction on $\dim M$.
\end{proof}

\section{Koszul homology of quasi-finite modules \\ with respect to filter-regular sequence}
In this section we show some properties of Koszul homology of a quasi-finite module \wrt \ a filter regular sequence. We also compute the Koszul
homology of $L^I(M)$ \wrt \ $\bu = x_1t,\ldots,x_st$ where $x_1,\ldots,x_s$ is an $M$-superficial sequence \wrt\ $I$.
\begin{theorem}
Let $E$ be a quasi-finite $\R$-module of order at least $s$ and let $\bu = u_1,\ldots, u_s$ be a $E$-filter regular sequence. Then for
$i = 1,\ldots,s$ we have
\begin{enumerate}[\rm (1)]
  \item  $H_i(\bu, E)$ is a finitely generated $\R$-module. It is also $\R_+$-torsion. In particular $H_i(\bu, E)$ is a finitely generated
  $A$-module.
  \item If $\bu$ is $E$-regular sequence then $H_i(\bu, E) = 0$ for $i = 1,\ldots, s$.
  \item If $H_1(\bu, E) = 0$ then $\bu$ is a $E$-regular sequence.
\end{enumerate}
\end{theorem}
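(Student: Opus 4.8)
The plan is to induct on $s$, at each step peeling off the last element $u_s$ and exploiting the long exact sequence of Koszul homology it produces,
\[
\cdots \xar H_i(\bu',E)(-1)\xrightarrow{\,u_s\,}H_i(\bu',E)\xar H_i(\bu,E)\xar H_{i-1}(\bu',E)(-1)\xrightarrow{\,u_s\,}H_{i-1}(\bu',E)\xar\cdots,
\]
where $\bu'=u_1,\dots,u_{s-1}$ (the twist by $-1$ records $u_s\in\R_1$; signs are irrelevant). Equivalently, for each $i$ there is a short exact sequence with middle term $H_i(\bu,E)$, left term $\operatorname{coker}\big(H_i(\bu',E)(-1)\xrightarrow{u_s}H_i(\bu',E)\big)$, and right term $\ker\big(H_{i-1}(\bu',E)(-1)\xrightarrow{u_s}H_{i-1}(\bu',E)\big)$. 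For the base case $s=1$ the Koszul complex is $0\xar E(-1)\xrightarrow{u_1}E\xar 0$, so $H_1(u_1,E)=(0\colon_E u_1)$: since $u_1$ is $E$-filter regular this module vanishes in all large degrees, and since $E_n=0$ for $n\ll 0$ with each $E_n$ finitely generated over $A$, it is concentrated in finitely many degrees and finitely generated over $A$ (hence over $\R$); moreover a power of $\R_+$ annihilates it because $\R_+$ strictly raises degree, so it is $\R_+$-torsion. Parts (2) and (3) for $s=1$ are the tautologies $u_1\text{ is }E\text{-regular}\iff(0\colon_E u_1)=0\iff H_1(u_1,E)=0$.

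For the inductive step of (1), note that $\bu'$ is an $E$-filter regular sequence and $E$ is quasi-finite of order at least $s-1$, so the inductive hypothesis applies to $(\bu',E)$; also $E/\bu' E$ is quasi-finite of order at least $1$ (by iterating the fact, noted after \ref{quasi-defn}, that modding out a filter-regular element drops the order by one) and $u_s$ is $E/\bu' E$-filter regular. For $2\le i\le s$ both $H_i(\bu',E)$ and $H_{i-1}(\bu',E)$ are finitely generated over $A$ and $\R_+$-torsion — by the inductive hypothesis when $i\le s-1$, and because $H_s(\bu',E)=0$ — so $H_i(\bu,E)$ has both properties, being an extension of a submodule of one by a quotient of the other (finitely generated $A$-modules, $A$ being Noetherian, and $\R_+$-torsion modules are each closed under submodules, quotients and extensions). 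For $i=1$, the left term of the short exact sequence is a quotient of $H_1(\bu',E)$ and the right term is $(0\colon_{E/\bu'E}u_s)$; the former is finitely generated over $A$ and $\R_+$-torsion by induction, the latter by the base-case argument applied to the quasi-finite module $E/\bu'E$ with its filter-regular element $u_s$. Hence $H_1(\bu,E)$ has the same properties. Since each $H_i(\bu,E)$ is then finitely generated over $A$, it is finitely generated over $\R$, proving (1).

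Statement (2) is the classical vanishing of Koszul homology of a (weakly) regular sequence and uses no quasi-finiteness: if $\bu$ is $E$-regular then so is $\bu'$ and $u_s$ is a nonzerodivisor on $E/\bu' E$; by induction $H_i(\bu',E)=0$ for $1\le i\le s-1$ and $H_s(\bu',E)=0$, so the short exact sequences force $H_i(\bu,E)=0$ for $2\le i\le s$ and $H_1(\bu,E)\hookrightarrow(0\colon_{E/\bu'E}u_s)=0$. For (3), suppose $H_1(\bu,E)=0$. Exactness of the long exact sequence at $H_1(\bu',E)$ shows $u_s\colon H_1(\bu',E)(-1)\to H_1(\bu',E)$ is surjective, and exactness at $H_0(\bu',E)(-1)$ shows $u_s\colon H_0(\bu',E)(-1)\to H_0(\bu',E)$ is injective, i.e. $u_s$ is a nonzerodivisor on $E/\bu' E=H_0(\bu',E)$. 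By part (1), $H_1(\bu',E)$ is finitely generated over $A$, hence concentrated in finitely many degrees, so $u_s^N$ annihilates it for $N\gg 0$; as $u_s$ acts surjectively, $H_1(\bu',E)=u_s^N H_1(\bu',E)=0$. The inductive hypothesis for (3) (valid since $\bu'$ is an $E$-filter regular sequence of length $s-1$ and $E$ has order at least $s-1$) shows $\bu'$ is $E$-regular, and together with $u_s$ a nonzerodivisor on $E/\bu' E$ this gives that $\bu$ is an $E$-regular sequence.

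The main obstacle is (3): for modules that are not finitely generated, $H_1=0$ does not in general imply regularity, and the argument goes through only because part (1) guarantees that filter-regularity together with quasi-finiteness makes the Koszul homology modules genuinely small — finitely generated over $A$ and $\R_+$-torsion — which is exactly what is needed to revive the usual rigidity argument. Thus (1) is the heart of the matter and must be established before (3); the rest is careful bookkeeping with one standard exact sequence, with the only real care being that all modules in sight are honestly finite over $A$ even though $E$ is not finite over $\R$.
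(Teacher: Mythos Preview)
Your proof is correct and follows essentially the same route as the paper: induction on $s$ via the long exact sequence obtained by peeling off $u_s$, which the paper records in the equivalent form $0 \to H_0(u_s,H_i(\bu',E)) \to H_i(\bu,E) \to H_1(u_s,H_{i-1}(\bu',E)) \to 0$. Your treatment of (3) --- surjectivity of $u_s$ on the finitely generated, bounded-degree module $H_1(\bu',E)$ forces it to vanish --- is the same argument the paper gives, phrased via $\R_+$-torsion rather than positive degree, and your handling of the $i=1$ case in (1) likewise matches the paper's.
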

\begin{proof}
(1) We prove it by induction on $s$.

The case $s =1$.\\
Notice $H_1(u_1,E) = (0 \colon_{u_1} E)$. Since $u_1$ is $E$-filter regular we get that $H_1(u_1,E)$ is a finitely generated $A$-module and hence a finitely generated $\R$-module.  Clearly it is also    $\R_+$ torsion.

We assume the result for $s = r$ and prove for $s = r+1$. Let $\bu = u_1,\ldots,u_r,u_{r+1}$ and $\bu^\pr = u_1,\ldots, u_{r}$. We have
for all $i \geq 0$ an exact sequence
\begin{equation}\label{utou}
0 \xar H_0(u_{r+1}, H_i(\bu^\pr, E)) \xar H_i(\bu, E) \xar H_1(u_{r+1}, H_{i-1}(\bu^\pr, E)) \xar  0
\end{equation}
Using induction hypothesis it follows that for $i \geq 2$ the modules $H_i(\bu, E)$ are finitely generated $\R$-modules and also $\R_+$-torsion.
For $i = 1$ notice that

(a) $H_0(u_{r+1}, H_1(\bu^\pr, E))$ is finitely generated $\R$-module.
  It is also $\R_+$-torsion.

  (b) $H_1(u_{r+1}, H_0(\bu^\pr, E)) = H_1(u_{r+1},  E/\bu^\pr E)$. Since $u_{r+1}$ is $E/\bu^\pr E$-filter regular then by $s =1$ case we have
  that $H_1(u_{r+1}, H_0(\bu^\pr, E))$ is a finitely generated $\R$-module and it also $\R_+$-torsion

  The result follows.

  (2) The standard proof works.

  (3) Nothing to prove when $s = 1$. So assume $  s \geq 2$. Set $r = s -1$.  We use equation \ref{utou}.
   If $H_1(\bu, E) = 0$ then $H_0(u_{r+1}, H_1(\bu^\pr,E)) = 0$.
   So we have $H_1(\bu^\pr, E) = u_{r+1}H_1(\bu^\pr, E)$. Since $H_1(\bu^\pr, E)$ is a finitely generated graded $\R$-module and
   $u_{r+1}$ has positive degree it follows that $H_1(\bu^\pr, E) = 0$. By induction hypothesis it follows that
   $u_1,\ldots,u_r$ is a $E$-regular sequence.

  From \ref{utou} we also get $$H_1(u_{r+1}, H_0(\bu^\pr, E)) = H_1(u_{r+1},  E/\bu^\pr E) = 0.$$
   So $u_{r+1}$ is $E/\bu^\pr E$- regular. It follows that $\bu$ is a $E$-regular sequence.
\end{proof}

\begin{proposition}\label{compute}
Let $M$ be a \CM \ $A$-module of dimension $r \geq 1$ and let $I$ be an ideal of definition for $M$.
Let $\bx = x_1,\ldots,x_s$ be a $M$-superficial sequence \wrt \ $I$ with $s \leq r$.  Set $u_i = x_it \in \R_1$ for $i = 1,\ldots,s$.
Then $\bu $ is a $L^I(M)$ filter-regular sequence and
$$ H_1(\bu, L^I(M)) = \bigoplus_{n\geq 1} \frac{I^{n+1}M \cap \bx M}{\bx I^nM} = \Vc_I(\bx, M). $$
\end{proposition}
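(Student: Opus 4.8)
The plan is the following. That $\bu$ is a $L^I(M)$-filter regular sequence is Proposition~\ref{supTOfilter}; the induction on $\dim M$ carried out there never uses that $s = r$, so it applies verbatim for $s \leq r$. To identify $H_1(\bu, L^I(M))$ I would work over the \emph{extended} Rees algebra and use the exact sequence
\[
0 \xar \eR_M \xar M[t,t^{-1}] \xar L^I(M)(-1) \xar 0
\]
defining the $\eR$-module structure on $L^I(M)$; the decisive feature is that over $\eR$ the sequence $\bu$ acts on the ambient module $M[t,t^{-1}]$ by a genuine regular sequence. (This fails for the ordinary Rees module: $\bu$ is \emph{not} $M[t]$-regular once $s\geq 2$.)

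First I would observe that $\bx = x_1,\ldots,x_s$ is an $M$-regular sequence: $M$ is \CM\ with $\depth M = r \geq 1$, so $x_1$ is a nonzerodivisor (by 1.1 together with Krull's intersection theorem), $M/x_1M$ is again \CM\ of dimension $r-1$ with $x_2,\ldots,x_s$ superficial \wrt\ $I$, and since $s \leq r$ one iterates. As $t$ is a unit of $A[t,t^{-1}]$, the generators $x_it$ differ from $x_i$ by a unit, whence $K_\bullet(\bu;M[t,t^{-1}])\cong K_\bullet(\bx;M[t,t^{-1}]) = K_\bullet(\bx;M)\otimes_A A[t,t^{-1}]$; flatness of $A[t,t^{-1}]$ over $A$ and $M$-regularity of $\bx$ then give $H_i(\bu,M[t,t^{-1}]) = 0$ for all $i\geq 1$. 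Tensoring the displayed short exact sequence with the bounded free complex $K_\bullet(\bu)$ of $\eR$-modules (no finiteness hypothesis on the modules is needed here) and passing to the long exact sequence of Koszul homology, the vanishing just noted forces the connecting map to be an isomorphism
\[
H_1(\bu,L^I(M)(-1)) \;\cong\; \ker\!\Big( \eR_M/\bu\eR_M \xar M[t,t^{-1}]/\bu M[t,t^{-1}] \Big) \;=\; \frac{\bu M[t,t^{-1}]\cap \eR_M}{\bu\eR_M},
\]
the last map being induced by the inclusion $\eR_M \subseteq M[t,t^{-1}]$.

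Finally I would compute the right-hand side degree by degree, using $\bu M[t,t^{-1}] = \bx M[t,t^{-1}]$ together with the convention $I^j = A$ for $j\leq 0$: in degree $n$ one has $(\eR_M)_n = I^nMt^n$, $(\bu M[t,t^{-1}])_n = \bx M\,t^n$ and $(\bu\eR_M)_n = \bx I^{n-1}M\,t^n$, so the degree-$n$ component of $H_1(\bu,L^I(M)(-1))$ equals $\dfrac{I^nM\cap \bx M}{\bx I^{n-1}M}$, which vanishes for $n\leq 1$ because there $\bx M\subseteq I^nM$ while $\bx I^{n-1}M = \bx M$. Shifting back by one degree then gives
\[
H_1(\bu,L^I(M)) = \bigoplus_{n\geq 1}\frac{I^{n+1}M\cap \bx M}{\bx I^nM} = \Vc_I(\bx,M).
\]
The one step requiring genuine care is the choice of the extended Rees module (so that $\bu$ becomes an honest regular sequence on the ambient module $M[t,t^{-1}]$), together with the bookkeeping of the twist $(-1)$ and of the boundary degrees $n\leq 1$; everything else is routine.
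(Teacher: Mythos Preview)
Your argument is correct and follows essentially the same route as the paper: both invoke Proposition~\ref{supTOfilter} for filter-regularity, pass to the defining exact sequence $0\to\eR_M\to M[t,t^{-1}]\to L^I(M)(-1)\to 0$ over the extended Rees algebra, use regularity of $\bu$ on $M[t,t^{-1}]$ to kill $H_1(\bu,M[t,t^{-1}])$, and then read off $H_1(\bu,L^I(M)(-1))$ as the kernel of $H_0(\bu,\eR_M)\to H_0(\bu,M[t,t^{-1}])$ degree by degree. Your write-up is somewhat more explicit about why $\bu$ is $M[t,t^{-1}]$-regular and about the vanishing in degrees $n\leq 1$, but there is no genuine difference in method.
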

\begin{proof}
Set $L = L^I(M)$.  In \ref{supTOfilter} we have shown already that $\bu $ is a $L^I(M)$ filter-regular sequence.

Consider the exact sequence
\[
0 \xar \eR_M \xar M[t,t^{-1}] \xar L(-1) \xar 0.
\]
It can be easily checked that $\bu$ is a $M[t,t^{-1}]$ regular sequence. So $H_1(\bu, M[t,t^{-1}]) = 0$. Thus we have an exact sequence
\[
0 \xar H_1(\bu, L(-1)) \xar H_0(\bu, \eR_M ) \xar H_0(\bu, M[t,t^{-1}]  )  \xar H_0(\bu, L ) \xar 0.
\]
Notice
\[
H_0(\bu, \eR_M ) = \bigoplus_{n\in Z} \frac{I^{n}M}{\bx I^{n-1}M} \quad \text{and} \quad H_0(\bu, M[t,t^{-1}]  )  = M/\bx M[t,t^{-1}]
\]
So
\[
H_1(\bu, L(-1))  = \bigoplus_{n\in Z} \frac{I^{n}M \cap \bx M}{\bx I^{n-1}M}.
\]
The result follows.
\end{proof}
\section{local cohomology of quasi-finite modules $E$ \\ with
            $\ell(E_n)$ finite for all $n \in \Z$}
In this section we prove a suprising fact:  the local cohomology modules

\noindent $H^i_{\R_+}(L^I(M))$ are all *-Artinian for $i = 0,\ldots, \depth M -1$.
It is convenient to prove it in the generality of quasi-finite modules.
\s \label{setup-lc} Throughout this section $H^i(-) = H_{\R_+}^i(-)$ the $i$-th local cohomology functor \wrt\ $\R_+$. In this section we assume that
\begin{enumerate}
  \item $(A,\m)$ is complete with infinite residue field.
  \item $E$ is a quasi-finite module of order at least  $s$.
  \item There exists an $E$-filter regular sequence of length $s$.
  \item $\ell(E_n)$ finite for all $n \in \Z$.
\end{enumerate}

\begin{remark}
The hypothesis on existence of  $E$-filter regular sequence of length $s$ is automatically satisfied if $k$ is uncountable.
The assumption  "$\ell(E_n)$ finite for all $n \in \Z$" is to imitate that of $L^I(M)$. Finally if $M$ is\ CM \ and $A$ has infinite residue field then assumptions 2, 3, 4 are automatically satisfied for $L^I(M)$.
The assumption $A$ is complete is needed since we will use Matlis-Duality.
\end{remark}

\begin{theorem}\label{locArt}[with hypotheses as in \ref{setup-lc}]
For $i = 0,\ldots, s-1$ we have
\begin{enumerate}[\rm (1)]
  \item $\ell(H^i(E)_n) < \infty$ for all $n \in \Z$.
  \item $H^i(E)^\vee$ is a Noetherian $\R$-module.
   \item $H^i(E)$ is a *-Artinian $\R$-module.
  \end{enumerate}
\end{theorem}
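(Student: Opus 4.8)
The plan is to prove the three statements by descending induction on the cohomological degree $i$, using the $E$-filter regular sequence to strip off one variable at a time and Matlis duality (which requires $A$ complete) to convert the $*$-Artinian assertion into a Noetherian one. First I would fix an $E$-filter regular sequence $\bu = u_1,\ldots,u_s \in \R_1$ and set $F = E/u_1 E$; by the remark following the definition of filter-regular element, $F$ is quasi-finite of order at least $s-1$, and since $\ell(E_n) < \infty$ for all $n$ we also get $\ell(F_n) < \infty$ for all $n$, so $F$ satisfies all the hypotheses of \ref{setup-lc} with $s$ replaced by $s-1$. The short exact sequence $0 \to (0:_E u_1) \to E \xrightarrow{u_1} E(1) \to (E/u_1E)(1) \to 0$, broken into two short exact sequences via the image $G = u_1 E(1)$, gives long exact sequences in $H^i_{\R_+}(-)$; note $(0:_E u_1)$ is $\R_+$-torsion and finitely generated in each degree and vanishes in large degrees, so it is $*$-Artinian and has finite-length graded pieces trivially.

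The engine of the induction is this: from the long exact sequences one reads off that $H^i(E)$ sits in an exact sequence relating it to $H^{i-1}(E/u_1E)$ and to $H^i(E)$ shifted by one, roughly
\[
H^{i-1}(E/u_1E)(1) \longrightarrow H^i(E) \xrightarrow{\ u_1\ } H^i(E)(1) \longrightarrow H^i(E/u_1E)(1)
\]
(with correction terms coming from $(0:_E u_1)$ in low degrees, which are harmless). So $H^i(E)_{n+1}$ is a subquotient of $H^i(E)_n \oplus H^{i-1}(E/u_1E)_{n+1} \oplus (\text{bounded torsion})$. By the induction hypothesis applied to $F = E/u_1E$ (valid for $i-1 \le s-2$, i.e. $i \le s-1$), the module $H^{i-1}(F)$ has finite-length graded pieces. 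I would then argue that $H^i(E)_n$ stabilizes or grows in a controlled way: multiplication by $u_1$ is eventually... — more precisely, I would show $\ell(H^i(E)_n) < \infty$ for each $n$ by induction on $n$ (it is $0$ for $n \ll 0$ since $E_n = 0$ there), using that the cokernel of $u_1$ injects into $H^i(F)(1)$ and the kernel of $u_1$ is a quotient of $H^{i-1}(F)(1)$, both of finite length in each degree. That proves (1).

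For (2) and (3), having (1) I pass to Matlis duals. Since each $H^i(E)_n$ has finite length, $(H^i(E)^\vee)_{-n} = (H^i(E)_n)^\vee$ also has finite length, and $*$-Artinian for $H^i(E)$ is equivalent to Noetherian (i.e. finitely generated) for $H^i(E)^\vee$ as a graded $\R$-module — here I use that $A$ is complete so Matlis duality is a faithful exact anti-equivalence on finite-length modules and behaves well gradedwise. Dualizing the exact sequences above turns the "$u_1$ surjects eventually on $H^i$" type statements into "$u_1$ acts locally nilpotently / the dual is generated in bounded degrees over a Noetherian thing" statements; concretely $H^i(E)^\vee$ becomes an extension built from $H^i(F)^\vee$ (Noetherian by induction) and a bounded piece, with $u_1$ acting, so it is a Noetherian $\R$-module. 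The main obstacle I anticipate is bookkeeping the correction terms from $(0:_E u_1)$ and the image module $G$ carefully enough to be sure the long exact sequences really do express $H^i(E)$ as an extension of finite-length-in-each-degree modules without losing finiteness — in particular making sure that the "eventually an isomorphism / eventually surjective" behaviour of multiplication by $u_1$ (which is where quasi-finiteness of order $\ge s$ and $H^i(F)_n = 0$ for $n \gg 0$, $i \le s-1$, get used) is invoked at the right spot; once that is pinned down, the Matlis-duality step is formal.
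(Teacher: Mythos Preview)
Your overall strategy matches the paper's: induct via a filter-regular element $u = u_1$, use the long exact sequence coming from $0 \to (0:_E u) \to E \to E \to F \to 0$ (with $F = E/uE$), and pass to Matlis duals for (2) and (3). However, there is a genuine error in your argument for (1).

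You claim that $H^i(E)_n = 0$ for $n \ll 0$ because $E_n = 0$ there, and propose to induct upward on $n$. This is false: local cohomology with respect to $\R_+$ is typically nonzero in arbitrarily negative degrees even when the module itself is bounded below (already for a finitely generated graded $\R$-module $M$ one has $H^i_{\R_+}(M)_n = 0$ for $n \gg 0$, not for $n \ll 0$; think of $H^1_{(t)}(A[t]) \cong t^{-1}A[t^{-1}]$). Indeed, the conclusion of the theorem is only that $H^i(E)$ is $*$-Artinian, not that it has finite length, so it must be allowed to live in infinitely many negative degrees. The fix, and what the paper does, is to reverse the direction: quasi-finiteness of order $\geq s$ gives exactly $H^i(E)_n = 0$ for $n \gg 0$ when $i \leq s-1$, and from your own exact sequence one reads off
\[
\ell\bigl(H^i(E)_n\bigr) \leq \ell\bigl(H^{i-1}(F)_{n+1}\bigr) + \ell\bigl(H^i(E)_{n+1}\bigr),
\]
which yields finiteness by \emph{descending} induction on $n$. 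Note this uses only $H^{i-1}(F)$, which is covered by the inductive hypothesis applied to $F$; your forward direction would also require finiteness of $H^i(F)_{n+1}$, which for $i = s-1$ the induction does not supply.

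A smaller gap: your sketch of (2) is too loose. Knowing that $W = H^i(E)^\vee$ has $W/uW$ finitely generated does not by itself force $W$ to be finitely generated (e.g.\ $W = k[t,t^{-1}]$ over $k[t]$). The missing ingredient is again the quasi-finite hypothesis: since $H^i(E)_n = 0$ for $n \gg 0$, the dual $W$ is bounded below, and then one checks degree by degree, starting from the bottom, that $W$ coincides with the submodule generated by any lift of a generating set of $W/uW$. This is precisely how the paper closes the argument.
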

\begin{proof}
We prove everything together by induction on $s$.

The case $s = 1$ \\
Clearly
 $\ell(H^0(E)_n) < \infty$ for all $n\in \Z$ and is zero for $n \ll 0$. By hypothesis $E$ is quasi-finite of order at least $1$. So   $H^0(E)_n = 0$ for all $n \gg 0$. The result follows.

We assume the result for $s = r$ and prove for $s = r+1$. Since $E$ is quasi-finite module of order at least  $r+1$ it is also  quasi-finite module of order at least  $r$. So by induction hypothesis applied to $E$ we have that
for $i = 0,\ldots,r-1$ the modules  $H^i(E)$ satisfy properties (1), (2) and (3).  It remains to prove that $H^r(E)$ satisfies properties (1), (2) and (3). \\
Let $u$ be $E$-filter regular. Set $F = E/uE$. We have an exact sequence
\[
0 \xar (0\colon_E u) \xar E(-1) \xrightarrow{u} E \xar F \xar 0.
\]
Since $(0\colon_E u)$ is $R_+$-torsion,  by using a standard trick, we get the exact sequence
\begin{align*}
0 \xar (0\colon_E u) \xar  H^0(E)(-1) &\xrightarrow{u}  H^0(E) \xar  H^0(F) \xar \\
                    H^1(E)(-1) &\xrightarrow{u} H^1(E) \xar  H^1(F) \xar \\
                    &\cdots                               \\
                    H^{r-1}(E)(-1) &\xrightarrow{u}  H^{r-1}(E) \xar  H^{r-1}(F) \xar\\
                    H^r(E)(-1) &\xrightarrow{u}  H^r(E).
\end{align*}
So we have an exact sequence
\begin{equation*}
H^{r-1}(F) \xrightarrow{\delta}    H^r(E)(-1) \xrightarrow{u}  H^r(E).  \tag{*}
\end{equation*}
 Since $F$ is quasi-finite of order at least $r$ we get that $H^{r-1}(F)$ satisfies  properties (1), (2) and (3).  We prove that $H^r(E)$ satisfies properties (1), (2) and (3).

 (1) By hypothesis on $E$ we have $H^r(E)_n = 0$ for all $n \gg 0$ say  from $n\geq c+1$. By equation (*) we have
 $H^{r-1}(F)_{c+1} \xrightarrow{\delta}    H^r(E)_c \xar  H^r(E)_{c+1} = 0$.
 Since  $H^{r-1}(F)$ satisfies (1) we get that  $H^r(E)_c$ has finite length. Once can induct on $j $ to show that  $H^r(E)_{c-j}$ has finite length for all $j \geq 0$.

 (2) We have an exact sequence of $\R$-modules
$$ H^r(E)^\vee \xrightarrow{u}  H^r(E)^\vee(+1) \xrightarrow{\delta^\vee}  H^{r-1}(F)^\vee. $$
Set $W =  H^r(E)^\vee$.  Since $H^{r-1}(F)^\vee $ is finitely generated $\R$-module it follows that $W/uW(+1)$ (and so $W/uW$) is finitely generated.

Say $V = <\xi_1,\ldots,\xi_m>$ is a $\R$-submodule of $W$ such that $W = V + u W$. We prove $W = V$. This we do degree-wise.
 By hypothesis on $E$ we have $H^r(E)_n = 0$ for all $n \gg 0$. So $W_n = 0$ for all $n \ll 0$ say from $n < c$.
 Since $\deg u = 1$ we have $W_c = V_c$. Notice
 \[
 W_{c+1} = V_{c+1} + uW_c = V_{c+1} + uV_c =  V_{c+1}.
 \]
 By induction on $j$  it is easy to show $W_{c+j} = V_{c+j}$ for all $j \geq 0$.

 (3) This follows from Matlis duality.
 \end{proof}

\begin{corollary}\label{loc-CA}[with hypotheses as in \ref{setup-lc}]
 For $i = 0,\ldots,s-1$ set $\af(E)_i = \ann_{\R} H^i(E)$ and $\q_i(E) = \af(E)_i \cap A$. If $H^i(E) \neq 0$
then $\q_i(E)$ is $\m$-primary.
\end{corollary}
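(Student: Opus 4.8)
The plan is to pass to the Matlis dual and exploit the fact that, unlike $H^i(E)$ itself, it is a genuinely finitely generated graded $\R$-module whose graded pieces have finite length and vanish in all sufficiently negative degrees; such a module automatically has $\m$-primary $A$-annihilator. Fix $i$ with $0 \le i \le s-1$ and suppose $H^i(E) \neq 0$. By Theorem \ref{locArt}, $W := H^i(E)^\vee$ is a Noetherian graded $\R$-module and $\ell(H^i(E)_n) < \infty$ for every $n \in \Z$, hence $\ell(W_n) < \infty$ for all $n$. Since $E$ is quasi-finite of order at least $s$ and $i \le s-1$, we have $H^i(E)_n = 0$ for $n \gg 0$, so $W_n = 0$ for $n \ll 0$. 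Because $A$ is complete and all graded pieces of $H^i(E)$ have finite length, graded Matlis duality restricts to a duality on the relevant category; in particular $H^i(E)^{\vee\vee} \cong H^i(E)$, and for $a \in A$ one has $a H^i(E) = 0$ if and only if $a W = 0$. Therefore $\q_i(E) = \ann_A H^i(E) = \ann_A W$, and $W \neq 0$ (as $H^i(E) \neq 0$) shows $\q_i(E)$ is a proper ideal of $A$.

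Next I would use Noetherianness of $W$ over $\R$ to reduce to a finite-length situation over $A$. Choose $c$ so that $W$ is generated as an $\R$-module in degrees $\le c$; then $W = \R \cdot W_{\le c}$ with $W_{\le c} := \bigoplus_{n \le c} W_n$. By the two observations above, $W_{\le c}$ is a finite direct sum of finite-length $A$-modules, hence $\ell_A(W_{\le c}) < \infty$, so $\m^k W_{\le c} = 0$ for some $k \ge 1$. Since $A$ is central in $\R$, for $a \in \m^k$, $\rho \in \R$ and $w \in W_{\le c}$ we get $a(\rho w) = \rho(a w) = 0$; hence $\m^k W = 0$, i.e. $\m^k \subseteq \ann_A W = \q_i(E)$. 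A proper ideal of $A$ containing a power of $\m$ has radical $\m$, so $\q_i(E)$ is $\m$-primary, as claimed.

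I do not expect a genuine obstacle here: the substantive work is already packaged in Theorem \ref{locArt}. The one point needing a little care is the compatibility of $A$-annihilators with Matlis duality, which is exactly why assumptions (1) (completeness of $A$) and (4) (finite length of the $E_n$) of \ref{setup-lc} are imposed — together they give $H^i(E)^{\vee\vee} \cong H^i(E)$. If one prefers to avoid invoking duality at this last step, one can argue directly on $H^i(E)$: being $*$-Artinian with finite-length components and with $H^i(E)_n = 0$ for $n \gg 0$, it is cogenerated by finitely many homogeneous elements, and a symmetric counting argument bounds $\ann_A H^i(E)$ from below by a power of $\m$. Either route finishes the proof.
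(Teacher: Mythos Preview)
Your proof is correct and follows essentially the same approach as the paper: both pass to the Matlis dual $W = H^i(E)^\vee$, use Theorem~\ref{locArt} to get that $W$ is a finitely generated graded $\R$-module with finite-length graded pieces, and then exploit finite generation to see that $A/\q_i(E)$ has finite length. The only cosmetic difference is that the paper packages the last step as an embedding $\R/\af_i(E) \hookrightarrow \bigoplus_j W(-\deg m_j)$ and reads off the degree-zero part, whereas you argue directly that some $\m^k$ kills the finitely many generators of $W$ and hence all of $W$.
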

\begin{proof}
Fix $i$ with $0 \leq i \leq s-1$.
Set $D_i = H^i(E)$ and assume it is non-zero. It is easily checked using Matlis duality that $\ann_{\R} D_i = \ann_{\R} D_i^\vee$.

Notice  $D_i^\vee$ is a finitely generated $\R$-module such that $\ell((D_i^\vee)_n)$ is finite for all $n$. Let
$m_1,\ldots,m_s$ be homogeneous generators of $D_i^\vee$. Consider the map
\begin{align*}
\frac{\R}{\af_i(E)} &\xrightarrow{\psi} \bigoplus_{j=1}^{s}D_i^\vee(-\deg m_j) \\
\ov{t} &\mapsto (tm_1,\ldots,tm_s).
\end{align*}
Clearly $\psi$ is injective.
 Taking degree zero part of this embedding gets us that
$\q_i(E)$ is $\m$-primary.
\end{proof}

\section{Proof of main theorem}
The proof of the following result is inspired by Theorem 8.1.2 from \cite{BH}; (also see \cite[Theorem 1]{Roberts}). However  we have to be extra careful at a few places.
The hypothesis of our result is not exactly similar and we are dealing with infinitely generated modules.

\begin{theorem}
Let $(A,\m)$ be  a complete Noetherian ring  with an infinite residue field  and let $I$ be an $\m$-primary ideal in $A$. Let
$N$ be a quasi-finite $\R$-module  of order at least  $m$.
 Assume $\bu = u_1,\ldots,u_m \in \R_1$ is  a $N$ filter-regular sequence such that
  \[
  H^*_{\bu}(N) = H^*_{\R_+}(N)
  \]
Also assume that $\ell(N_n)$ is finite for all $n\in \Z$.
Set $\bu^\pr = u_1,\ldots,u_n$ with $n \leq m$ and let
\[
\K = \K(\bu^\pr, N)  \colon 0 \rt E_n \rt \cdots\rt E_1\rt E_0\rt 0
\]
be the Koszul complex of $\bu^\pr$ with coefficients in $N$.

For  $j = 0,\ldots, m-1$
set $\bbf_j = \ann_{\R} H^j_{\R_+}(N) $ and $\q_j = A\cap \bbf_j$.
Then  $\q_0\q_1\cdots\q_{n-1}$ annihilates  $H_1(\K(\bu^\pr, N))$.
\end{theorem}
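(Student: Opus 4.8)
The plan is to carry out the comparison-of-two-spectral-sequences argument of \cite[Theorem~1]{Roberts} (compare \cite[8.1.2]{BH}) for the double complex built from a \v{C}ech complex on $\bu$ and the Koszul complex on $\bu^\pr$, taking extra care because $N$ is not finitely generated. Concretely, I would form the bounded double complex
\[
T^{q,-p} \ = \ \check{C}^{\,q}\big(\bu;\,K_p(\bu^\pr,N)\big)\qquad (0\le q\le m,\ 0\le p\le n),
\]
where $\check{C}^{\,\bullet}(\bu;-)$ is the \v{C}ech complex on $\bu=u_1,\dots,u_m$ (equivalently $\check{C}^{\,\bullet}(\bu;\R)\otimes_{\R}\K(\bu^\pr,N)$), with horizontal differential the \v{C}ech differential and vertical differential the Koszul differential; I grade the total complex so that $T^{q,-p}$ lies in cohomological degree $q-p$. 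Since $T^{\bullet,\bullet}$ is bounded, both associated spectral sequences converge to $H^{\bullet}(\Tot T^{\bullet,\bullet})$, and the whole point is to compute $H^{-1}(\Tot T^{\bullet,\bullet})$ in two different ways.

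First, taking Koszul homology before \v{C}ech cohomology: because each functor $\check{C}^{\,q}(\bu;-)$ is exact, the second page is ${}^{\mathrm I}E_2^{q,-p}=H^q_{\bu}\big(H_p(\K(\bu^\pr,N))\big)$. By the theorem on Koszul homology of quasi-finite modules proved in Section~3, for each $p\ge1$ the module $H_p(\K(\bu^\pr,N))$ is a finitely generated $\R$-module that is $\R_+$-torsion, hence — as $(\bu)\subseteq\R_+$ — also $(\bu)$-torsion; so $H^q_{\bu}$ of it is zero for $q\ge1$ and equals it for $q=0$. Consequently the only nonzero entry in total degree $-1$ is ${}^{\mathrm I}E_2^{0,-1}=H_1(\K(\bu^\pr,N))$, and the only differential that could meet it, namely ${}^{\mathrm I}d_r\colon{}^{\mathrm I}E_r^{0,-1}\to{}^{\mathrm I}E_r^{r,-r}$ with $r\ge2$, has target a subquotient of $H^r_{\bu}\big(H_r(\K(\bu^\pr,N))\big)=0$. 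Therefore $H^{-1}(\Tot T^{\bullet,\bullet})\cong H_1(\K(\bu^\pr,N))$.

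Second, taking \v{C}ech cohomology before Koszul homology: here $K_p(\bu^\pr,N)\cong N(-p)^{\binom{n}{p}}$ as graded $\R$-modules, so by the hypothesis $H^{*}_{\bu}(N)=H^{*}_{\R_+}(N)$ the second page is ${}^{\mathrm{II}}E_2^{q,-p}=H_p\big(\bu^\pr;\,H^q_{\R_+}(N)\big)$. Since $\q_q=A\cap\ann_{\R}H^q_{\R_+}(N)$ kills $H^q_{\R_+}(N)$, it kills every Koszul homology module $H_p(\bu^\pr;H^q_{\R_+}(N))$, hence kills the subquotient ${}^{\mathrm{II}}E_\infty^{q,-p}$. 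The graded pieces of $H^{-1}(\Tot T^{\bullet,\bullet})$ for this filtration are exactly the modules ${}^{\mathrm{II}}E_\infty^{q,-p}$ with $q-p=-1$, that is $p=q+1$ with $1\le p\le n$, so $q$ runs through $0,1,\dots,n-1$; thus $\q_0\q_1\cdots\q_{n-1}$ annihilates each graded piece and therefore annihilates $H^{-1}(\Tot T^{\bullet,\bullet})$. Combining this with the first computation gives $\q_0\q_1\cdots\q_{n-1}\cdot H_1(\K(\bu^\pr,N))=0$, which is the assertion.

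The step that needs genuine care — and, I expect, the reason the authors stress they must be careful — is the input from Section~3: since $N$ is only quasi-finite, it is not clear a priori that the Koszul homology modules $H_p(\K(\bu^\pr,N))$ for $p\ge1$ are finitely generated $\R$-modules and $\R_+$-torsion, yet this (together with $\bu^\pr$ being an initial segment of the $N$-filter-regular sequence $\bu$ and $N$ having order at least $m\ge n$) is precisely what forces the vanishing of the differentials in the first spectral sequence and hence the identification $H^{-1}(\Tot T^{\bullet,\bullet})\cong H_1(\K(\bu^\pr,N))$ rather than just an inclusion. The other point to watch is that the identity $H^{*}_{\bu}(N)=H^{*}_{\R_+}(N)$ is invoked only on $N$ itself, exactly when identifying the second page of the second spectral sequence; boundedness of $T^{\bullet,\bullet}$ then takes care of convergence, and everything stays in the category of graded $\R$-modules.
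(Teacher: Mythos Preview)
Your argument is correct and follows the same strategy as the paper's own proof: both form the double complex built from the \v{C}ech complex on $\bu$ and the Koszul complex $\K(\bu^\pr,N)$, run the two standard spectral sequences, use Theorem~3.1 to see that the ``Koszul-first'' spectral sequence degenerates so that $H_1(\K)$ is a homology group of the total complex, and then read off the annihilator $\q_0\q_1\cdots\q_{n-1}$ from the filtration coming from the ``\v{C}ech-first'' spectral sequence. The only difference is bookkeeping: the paper shifts the \v{C}ech complex by $m$ and writes everything homologically (so that $H_1(\K)\cong H_{m+1}(\Y)$ and the relevant annihilators are indexed by $\q_{m-q}=\q_{p-1}$), whereas you keep cohomological indexing and land at total degree $-1$; the content is identical.
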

\begin{proof}
Let $\C$ be the Cech co-chain complex on $u_1,\ldots, u_m$. We shift $\C$ $m$-places and write it as a chain complex
\[
\D \colon 0 \rt D_m \rt \cdots \rt D_1 \rt D_0 \rt 0.
\]
By construction $H_i(N\otimes \D) = H^{m-i}_{\R_+}(N)$.

Consider the chain bicomplex $\X = \D \otimes \K$. We consider the two standard spectral sequences to compute the homology of  $\Y = \Tot(\X)$; the total
complex of $\X$.

\emph{The first spectral sequence:}\\
${}^{I}E^0_{pq} = D_p\otimes K_q$. So
\begin{align*}
{}^{I}E^1_{pq} &= H_q(D_p \otimes \K) \\
&= D_p\otimes H_q(\K), \quad \text{since $D_p $ is flat}.
\end{align*}
By Theorem 3.1 we have that $H_q(\K)$ is $\R_+$-torsion for all $q > 0$.
 It follows that
\[
{}^{I}E^1_{pq} = \begin{cases} 0 & \text{for $q > 0$ and $p \neq m$,} \\ H_q(\K) & \text{for $q > 0$ and $p = m$,}\\ D_p\otimes H_0(\K) &\text{for $q = 0$.} \end{cases}
\]
Therefore
\[
{}^{I}E^2_{pq} = \begin{cases} 0 & \text{for $q > 0$ and $p \neq m$,} \\ H_q(\K) & \text{for $q > 0$ and $p = m$,}\\ H^{m-p}_{\R_+}(H_0(\K)) &\text{for $q = 0.$} \end{cases}
\]
Observe that this spectral sequence \emph{collapses }at ${}^{I}E^2$. So $H_{m+i}(\Y) \cong H_i(\K)$ for $1\leq i \leq n$.

\emph{The second spectral sequence:}\\
${}^{II}E^0_{pq} = D_q\otimes K_p$. So
$$ {}^{II}E^1_{pq} = H_q(\D\otimes K_p) = H^{m-q}_{R_+}(K_p) = \left( H^{m-q}_{R_+}(N) \right)^{\binom{n}{p}}.                              $$
By construction $\q_{m-q}$ annihilates ${}^{II}E^1_{pq}$ if $q \neq 0$. Since  ${}^{II}E^\infty_{pq}$ is a subquotient of ${}^{II}E^1_{pq}$ we get that
  $\q_{m-q}$ annihilates $E^\infty_{pq}$ if $q \neq 0$.

  Let $0 = V_{-1} \subseteq V_0 \subseteq V_1 \subseteq \cdots \subseteq V_{j-1} \subseteq V_j = H_{m+1}(\Y)$ be the filtration
  such that  ${}^{II}E^\infty_{p,m+1-p} \cong V_p/V_{p-1}$.
  Notice  ${}^{II}E^\infty_{p,m+1-p} = 0$ for $p > n$ and $m+1 -p >m$ (equivalently $p < 1$). So in the filtration $1 \leq p \leq n$.
  Notice in this range $q = m+1-p \neq 0$ (otherwise $p = m+1 > n$). So $\q_{m-q} = \q_{p-1}$ annihilates  ${}^{II}E^\infty_{p,m+1-p} $ for
  the range $1 \leq p \leq n$. It follows that $\q_0\q_1\cdots\q_{n-1}$ annihilates $H_{m+1}(\Y)$. The result follows since $H_{m+1}(\Y)= H_1(\K)$.
\end{proof}

\begin{theorem}\label{man}
Let $(A,\m)$ be a complete \CM \ local ring of dimension  with infinite residue field and dimension $d \geq 1$. Let $I$ be an $\m$-primary ideal in $A$. Set $L = L^I(A)$. For $i = 0,\ldots, d-1$ set
$\q_i = A \cap \ann_{\R}H^i_{R_+}(L)$. For $r =1,\ldots,d$ set
\[
\af_r(I) = \bigcap_{\substack{\text{$\bx = x_1,\ldots,x_r$ is a } \\ \text{ superficial sequence of $I$}}} \ann_A \Vc_I(\bx)
\]
Then $\af_r(I) \supseteq \q_0\cdots\q_{r-1}$. In particular if $\depth G_I(A) < r$ then $\af_r(I)$ is $\m$-primary.
\end{theorem}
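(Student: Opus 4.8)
The strategy is to reduce Theorem \ref{man} to the previous theorem applied with $N = L = L^I(A)$ and $m = d$. First I would observe that all the hypotheses of the previous theorem are met: by the propositions in Section 2, $L$ is quasi-finite of order at least $d$, and $\ell(L_n) = \ell(A/I^{n+1})$ is finite for all $n$; moreover, by Remark \ref{indep} we have $H^i_{\R_+}(L) = H^i_{\eR_+}(L)$, and since $\bu = u_1,\dots,u_d$ with $u_i = x_i t$ arises from a superficial sequence, the condition $H^*_{\bu}(L) = H^*_{\R_+}(L)$ holds (this is exactly the point where one needs the $\eR$-module structure, via the computation in the proof of the quasi-finiteness proposition). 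I would state this carefully, possibly factoring it out as a small lemma so the hypothesis ``$H^*_{\bu}(N) = H^*_{\R_+}(N)$'' of the previous theorem is genuinely verified.

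Next, fix an arbitrary $A$-superficial sequence $\bx = x_1,\dots,x_r$ with $r \le d$, and set $\bu' = u_1,\dots,u_r$ with $u_i = x_i t$. By Proposition \ref{compute} we have $H_1(\K(\bu', L)) = \Vc_I(\bx)$. The previous theorem then tells us precisely that $\q_0 \q_1 \cdots \q_{r-1}$ annihilates $H_1(\K(\bu', L)) = \Vc_I(\bx)$, i.e. $\q_0 \cdots \q_{r-1} \subseteq \ann_A \Vc_I(\bx)$. Since this holds for every superficial sequence of length $r$, intersecting over all of them gives $\q_0 \cdots \q_{r-1} \subseteq \af_r(I)$, which is the first assertion.

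For the ``in particular'' clause, I would argue as follows. By Corollary \ref{loc-CA} (applied to $E = L$, which satisfies the hypotheses of \ref{setup-lc}), each $\q_i$ is either $\m$-primary or all of $A$; in particular the product $\q_0 \cdots \q_{r-1}$ is $\m$-primary as soon as every $\q_i$ for $i < r$ is $\m$-primary, equivalently as soon as $H^i_{\R_+}(L) \ne 0$ for all $i = 0,\dots,r-1$. So it suffices to show: if $\depth G_I(A) < r$ then $H^i_{\R_+}(L) \ne 0$ for some $i \le r-1$ — actually one wants the stronger statement that the relevant product is $\m$-primary, so I should be a little careful. The cleanest route: if $\af_r(I)$ were \emph{not} $\m$-primary, then by Proposition \ref{compute} together with the theorem from the first two pages (the consequence of Valabrega--Valla and \cite[2.1]{HM}), $\Vc_I(\bx) = 0$ for a generic superficial sequence would force $\depth G_I(A) \ge r$ — wait, that is not quite an ``if and only if'' at the level of a single $\bx$. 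The honest argument is: $\af_r(I)$ is $\m$-primary iff some $\Vc_I(\bx)$ has an annihilator that, intersected with the others, stays $\m$-primary; the clean implication I actually have is the \emph{containment} $\af_r(I) \supseteq \q_0\cdots\q_{r-1}$, so $\af_r(I)$ is $\m$-primary provided $\q_0\cdots\q_{r-1}$ is, i.e. provided $H^i_{\R_+}(L)\ne 0$ for $0\le i\le r-1$. Thus I must show $\depth G_I(A) < r \implies H^i_{\R_+}(L)\ne 0$ for all $i \le r-1$; this should follow from the long exact sequence relating $H^\bullet_{\R_+}(L)$, $H^\bullet_{\R_+}(\R_A)$ (equivalently $G_I(A)$ up to the usual shift/degree-zero issues) and $H^\bullet_{\mathfrak m}(A)$, combined with the fact that $A$ is \CM, so $H^i_{\mathfrak m}(A) = 0$ for $i < d$.

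**Main obstacle.** The routine part is the bookkeeping with the two spectral sequences — but that has already been done in the previous theorem, so here the real work is (a) verifying the hypothesis $H^*_{\bu}(L) = H^*_{\R_+}(L)$ cleanly (this is where the passage between $\R$ and $\eR$ and the fact that $\bu$ comes from a superficial sequence is used), and (b) establishing the nonvanishing $H^i_{\R_+}(L)\ne 0$ for $i < r$ when $\depth G_I(A) < r$. Step (b) is the one I expect to be delicate: one must translate the hypothesis on $\depth G_I(A)$ into nonvanishing of the low local cohomology of $L$ rather than of $G_I(A)$ itself, which requires comparing $H^\bullet_{\R_+}(L^I(A))$ with $H^\bullet_{\R_+}(G_I(A))$ via the exact sequence $0 \to G_I(A)(-1) \to L^I(A)(-1) \to L^I(A) \to 0$ (or the $\R_A \to A[t] \to L^I(A)(-1)$ sequence) and keeping track of the degree shift; the \CM{} hypothesis on $A$ is what makes the comparison go through in the range $i < d$.
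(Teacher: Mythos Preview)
Your argument for the containment $\af_r(I) \supseteq \q_0\cdots\q_{r-1}$ is essentially the paper's, with two small wrinkles. First, the paper verifies $H^*_{\bu}(L) = H^*_{\R_+}(L)$ by the elementary observation that a maximal superficial sequence $x_1,\ldots,x_d$ generates a reduction of $I$, so $\bu = x_1t,\ldots,x_dt$ generates $\R_+$ up to radical; no passage through $\eR$ is needed. Second, to apply Theorem~5.1 you must first \emph{extend} the given length-$r$ superficial sequence $x_1,\ldots,x_r$ to a maximal one $x_1,\ldots,x_d$ (the paper does this explicitly); only then do you have the full sequence $\bu$ of length $m = d$ required in the hypotheses, and then take $\bu' = u_1,\ldots,u_r$.

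The real gap is in your treatment of the ``in particular'' clause. You talk yourself out of the easy argument and into an unnecessary and in general false claim, namely that $\depth G_I(A) < r$ forces $H^i_{\R_+}(L) \neq 0$ for \emph{every} $i \le r-1$. For instance, when $\depth G_I(A) \geq 1$ one has $H^0_{\R_+}(L)_n = \widetilde{I^{n+1}}/I^{n+1} = 0$ for all $n$, so $\q_0 = A$ even though $\depth G_I(A)$ may well be $< r$. The correct argument is much simpler. By Corollary~\ref{loc-CA} each $\q_i$ is $\m$-primary or equal to $A$, so the product $\q_0\cdots\q_{r-1}$ always contains some $\m^N$; hence $\af_r(I) \supseteq \m^N$. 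On the other hand, the Valabrega--Valla/Huckaba--Marley equivalence quoted in the introduction \emph{is} an if-and-only-if at the level of a single superficial sequence: $\Vc_I(\bx) = 0 \iff \depth G_I(A) \ge r$. Thus $\depth G_I(A) < r$ gives $\Vc_I(\bx) \neq 0$ for every superficial $\bx$ of length $r$, whence each $\ann_A \Vc_I(\bx)$ is proper and $\af_r(I) \subseteq \m$. In a local ring, $\m^N \subseteq \af_r(I) \subseteq \m$ forces $\af_r(I)$ to be $\m$-primary. No comparison of $H^\bullet_{\R_+}(L)$ with $H^\bullet_{\R_+}(G_I(A))$ is required.
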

\begin{proof}
By 2.4, $L$ is quasi-finite $\R$-module of order at least $d$.
Fix $r \geq 1$. Let $\bx^\pr = x_1,\ldots,x_r$ be an $I$-superficial sequence.
 Then $\bx^\pr$ can be extended to a maximal superficial sequence $\bx = x_1,\ldots,x_r,x_{r+1},\ldots,x_d$.
  Set $u_i = x_it \in \R_1$. Then by 2.5
$\bu = u_1,\ldots,u_d$ is a $L$-filter regular sequence. Since $(\bx)$ is a reduction of $I$ it follows that
$\bu$ generates $\R_+$ up to radical. So $H^{i}_{\bu}(L) = H^i_{\R_+}(L)$. Set $\bu^\pr = u_1,\ldots,u_r$. Let $\K(\bu^\pr,L)$ be the Koszul complex on
$\bu^\pr$ with coefficients in $L$. By 3.2 we get that $H_1(\bu^\pr, L) = \Vc_I(\bx^\pr)$. From Theorem 5.1.
we get $\ann_A \Vc_I(\bx^\pr) \supseteq \q_0\cdots\q_{r-1}$. Since $\bx^\pr$ was an arbitary superficial sequence of length $r$ we get $\af_r(I) \supseteq \q_0\cdots\q_{r-1}$.
\end{proof}

We now drop the assumption that $A$ is complete.
\begin{theorem}\label{main}
Let $(A,\m)$ be a \CM \ local ring with infinite residue field and dimension $d \geq 1$. Let $I$ be an $\m$-primary ideal and let $1 \leq r \leq d$. Then
\[
\af_r(I\widehat{A}) \cap A \subseteq \af_r(I).
\]
Furthermore
if $\depth G_I(A) < r$ then
$\af_r(I)$ is $\m$-primary.
\end{theorem}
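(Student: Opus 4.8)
The plan is to reduce to the complete case, which is Theorem~\ref{man}, by faithfully flat base change along $A \rt \widehat{A}$. Note first that $\widehat{A}$ is a complete \CM\ local ring with the same infinite residue field and with $\dim\widehat{A} = d \geq 1$, and that $I\widehat{A}$ is $\m\widehat{A}$-primary, so Theorem~\ref{man} is available for the pair $(\widehat{A}, I\widehat{A})$. The key preliminary observation is that an $A$-superficial sequence $\bx = x_1,\ldots,x_r$ \wrt\ $I$ is again an $\widehat{A}$-superficial sequence \wrt\ $I\widehat{A}$: faithful flatness gives $(I^{n+1}\colon_A x)\widehat{A} = (I^{n+1}\widehat{A}\colon_{\widehat{A}} x)$ and lets intersections commute with $\otimes_A\widehat{A}$, so the defining colon/intersection conditions of a superficial sequence ascend (and descend) along $A \rt \widehat{A}$. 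By the same flatness, for every $n$ one gets
\[
\frac{I^{n+1}\widehat{A}\cap(\bx)\widehat{A}}{\bx I^n\widehat{A}} \;\cong\; \frac{I^{n+1}\cap(\bx)}{\bx I^n}\otimes_A\widehat{A},
\]
hence a graded isomorphism $\Vc_{I\widehat{A}}(\bx) \cong \Vc_I(\bx)\otimes_A\widehat{A}$.

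With these in hand I would prove $\af_r(I\widehat{A})\cap A \sub \af_r(I)$ directly. Let $a$ lie in the left-hand side. The intersection defining $\af_r(I\widehat{A})$ runs over all $\widehat{A}$-superficial sequences of length $r$, in particular over those of the form $\bx$ with $\bx$ an $A$-superficial sequence; for such an $\bx$ we have $a\cdot\Vc_{I\widehat{A}}(\bx) = 0$, i.e.\ $(a\Vc_I(\bx))\otimes_A\widehat{A} = 0$, and faithful flatness of $\widehat{A}$ over $A$ forces $a\Vc_I(\bx) = 0$. Since $\bx$ was an arbitrary $A$-superficial sequence of length $r$, $a \in \af_r(I)$.

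For the last assertion, use that $G_{I\widehat{A}}(\widehat{A}) \cong G_I(A)\otimes_A\widehat{A}$ and faithful flatness to get $\depth G_{I\widehat{A}}(\widehat{A}) = \depth G_I(A)$; thus $\depth G_I(A) < r$ implies $\depth G_{I\widehat{A}}(\widehat{A}) < r$, and Theorem~\ref{man} says $\af_r(I\widehat{A})$ is $\m\widehat{A}$-primary. Then $A/(\af_r(I\widehat{A})\cap A)$ embeds in the finite-length ring $\widehat{A}/\af_r(I\widehat{A})$, so $\af_r(I\widehat{A})\cap A$ is $\m$-primary; by the previous paragraph it is contained in $\af_r(I)$. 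Finally $\af_r(I)$ is proper: under $\depth G_I(A) < r$ the Valabrega--Valla criterion forces $\Vc_I(\bx) \neq 0$ for every superficial $\bx$, so each $\ann_A\Vc_I(\bx)$, being the annihilator of a nonzero finite-length module, lies in $\m$. Hence $\af_r(I)$ is $\m$-primary.

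I expect the only genuinely delicate point to be the first step — checking that superficiality and the Valabrega--Valla module commute with completion. Each ingredient is a routine consequence of faithful flatness, but one must handle the colon ideals, the intersections $I^{n+1}\cap(\bx)$, and the finite-length hypothesis together; once this is set up, the reduction to Theorem~\ref{man} is automatic.
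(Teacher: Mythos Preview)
Your proposal is correct and follows essentially the same route as the paper: pass superficial sequences and the Valabrega--Valla module to the completion (the paper simply notes $\Vc_{I\widehat{A}}(\bx) = \Vc_I(\bx)$ because the module has finite length, which is your tensor argument), restrict the intersection defining $\af_r(I\widehat{A})$ to sequences coming from $A$, and invoke Theorem~\ref{man} for the complete case. Your explicit check that $\af_r(I)$ is proper via the Valabrega--Valla criterion makes precise a point the paper leaves implicit.
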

\begin{proof}
Let $\widehat{A}$ be the completion of $A$. Let $\bx = x_1,\ldots,x_r$ be an $I$-superficial sequence. Then $\bx$ considered as a sequence in $\widehat{A}$ is also a $\widehat{I}$-superficial sequence. Furthermore $\Vc_{I\widehat{A}}(\bx) = \Vc_{I}(\bx)$ since it is of finite length.
It follows that $\ann_{\widehat{A}} \Vc_{I\widehat{A}}(\bx) \cap A = \ann_A \Vc_I(\bx)$.

Notice
\[
\af_r(I\widehat{A}) \subseteq \bigcap_{\substack{\text{$\bx = x_1,\ldots,x_r$ is a } \\ \text{ superficial sequence of $I$}}} \ann_A \Vc_{I\widehat{A}}(\bx).
\]
Therefore $\af_r(I\widehat{A}) \cap A \subseteq \af_r(I)$. Furthermore as $G_{I\widehat{A}}(\widehat{A}) = G_I(A)$ has depth $< r$ we have that
$\af_r(I\widehat{A})$ is $\widehat{\m}$-primary. It follows that $\af_r(I)$ is $\m$-primary.
\end{proof}

\section{Powers of $I$}
In this section we invesitigate $\af_r(I^l)$ for $ l \geq 1$.
One of the advantages of $L^I(A)$ is that $L^I(A)(-1)$ commutes with the Veronese functor.
Clearly
\[
\left(L^I(A)\right)^{<l>}  = L^{I^l}(A)(-1) \quad \text{for each} \ l \geq 1.
\]
Also note that for the Rees algebras we have
\[
\Rc(I^l) = \R^{<l>} \quad \text{and} \quad \Rc(I^l)_+ = \R_+^{<l>}.
\]
Local cohomology also commutes with the Veronese functor. So we have that
\[
H^i_{\Rc(I^l)_+} \left(L^{I^l}(A)(-1)\right)  \cong  \left(H^i_{\R_+}(L^I(A))(-1)\right)^{<l>} \quad \text{for all}  \ l \geq 1.
\]

We first prove the following general result.
\begin{lemma}\label{ml-powers}
Let $(A,\m)$ be a Noetherian local ring and let $I$ be an $\m$-primary ideal. 
Let $E$ be a finitely generated graded $\R$-module with $\ell(E_n) < \infty$ for all $n \in \Z$.
For $l \geq 1$ set
\[
\q(I^l)_E = \left(\ann_{\Rc(I^l)} E^{<l>} \right)\cap A.
\]
Then
\begin{enumerate}[\rm(1)]
\item
$\q(I^l)_E$ is $\m$-primary for each $l \geq 1$.
\item
For each $r,l \geq 1$ we have
\[
\q(I^l)_E \subseteq   \q(I^{rl})_E.
\]
\item
The set
\[
\mathcal{C} = \{ \q(I^l)_E \mid l \geq 1 \},
\]
has a unique maximal element which we denote as $\q(I)^\infty_E$.
\end{enumerate}
\end{lemma}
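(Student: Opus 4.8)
The plan is to handle the three parts in order, noting in advance that (2) is a purely formal statement about the interaction of the Veronese functor with annihilators, that (3) follows from (2) together with the Noetherian hypothesis on $A$, and that only (1) carries any real content --- and that content is precisely the argument already used in Corollary~\ref{loc-CA}. I would also record at the outset that ``$\m$-primary'' in (1) should be read as ``$\m$-primary or equal to $A$'': the module $E^{<l>}$ can vanish even when $E \neq 0$ (for instance when $E$ is concentrated in degrees not divisible by $l$), and then $\q(I^l)_E = A$. If one assumes in addition that $E_n \neq 0$ for all $n \gg 0$, as happens in the applications, this degenerate case does not occur.

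For part (1) the first step is to check that $E^{<l>}$ is a finitely generated graded module over $\Rc(I^l) = \R^{<l>}$ with $\ell\big((E^{<l>})_n\big) = \ell(E_{nl})$ finite for every $n$. Finite generation I would get from the fact that $\R$ is itself a finitely generated $\R^{<l>}$-module: for $n = ql+s$ with $0 \leq s < l$ we have $I^n = I^{ql}I^s$, so $\R$ is generated as an $\R^{<l>}$-module by its graded components in degrees $0,\dots,l-1$, and these together form a finitely generated $A$-module; hence the finitely generated $\R$-module $E$ is finitely generated over $\R^{<l>}$, and $E^{<l>}$, being the $\R^{<l>}$-direct summand of $E$ consisting of the degrees divisible by $l$, inherits finite generation. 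The second step is then to rerun the proof of Corollary~\ref{loc-CA}: choose homogeneous generators $m_1,\dots,m_t$ of $E^{<l>}$ over $\Rc(I^l)$, observe that $\ov{t}\mapsto(tm_1,\dots,tm_t)$ defines an injection $\Rc(I^l)/\ann E^{<l>} \hookrightarrow \bigoplus_{j=1}^t E^{<l>}(-\deg m_j)$, and pass to degree-zero parts to embed $A/\q(I^l)_E$ into a finite direct sum of the finite-length modules $(E^{<l>})_{-\deg m_j}$. Hence $A/\q(I^l)_E$ has finite length, and $\q(I^l)_E$ is $\m$-primary (or $A$).

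For part (2) the key identities are $(E^{<l>})^{<r>} = E^{<rl>}$ and $\Rc(I^l)^{<r>} = \Rc(I^{rl})$, together with the observation that $A$ is the degree-zero component of every Rees algebra in sight. If $a \in \q(I^l)_E$, then $a \in A$ and $a\cdot(E^{<l>})_n = a\cdot E_{nl} = 0$ for all $n$; in particular $a\cdot E_{nrl} = 0$ for all $n$, so $a$ kills every graded piece of $E^{<rl>}$ and hence $a \in \ann_{\Rc(I^{rl})} E^{<rl>}$. Intersecting with $A$ gives $a \in \q(I^{rl})_E$, so $\q(I^l)_E \sub \q(I^{rl})_E$.

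For part (3) I would apply part (2) to the divisibilities $n! \mid (n+1)!$ to see that $\q(I^{1!})_E \sub \q(I^{2!})_E \sub \q(I^{3!})_E \sub \cdots$ is an ascending chain of ideals of $A$; as $A$ is Noetherian it stabilises, say $\q(I^{N!})_E = \q(I^{n!})_E$ for all $n \geq N$. Then $\q(I^{N!})_E$ is the largest element of $\mathcal{C}$: given any $l \geq 1$, choose $n \geq \max(N,l)$, so that $l \mid n!$ and hence, by part (2), $\q(I^l)_E \sub \q(I^{n!})_E = \q(I^{N!})_E$. Thus $\q(I^{N!})_E$ lies in $\mathcal{C}$ and contains every member of $\mathcal{C}$, so it is the unique maximal element $\q(I)^\infty_E$. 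The only point at which I expect to have to be careful is the finite generation of $E^{<l>}$ over $\Rc(I^l)$ in part (1), together with the bookkeeping around the degenerate case $E^{<l>}=0$; everything else is formal manipulation of Veronese subrings plus one use of the ascending chain condition.
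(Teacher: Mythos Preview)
Your proof is correct and, for parts (1) and (2), essentially identical to the paper's: the paper likewise reduces (1) to the finite-generation of $E^{<l>}$ over $\Rc(I^l)$ and then invokes the argument of Corollary~\ref{loc-CA}, and it handles (2) via the same Veronese identity $(E^{<l>})^{<r>}=E^{<rl>}$ and the observation that $a\in A$ killing all $E_{nl}$ kills all $E_{nrl}$. Your additional justification of finite generation of $E^{<l>}$ and your remark about the degenerate case $E^{<l>}=0$ are details the paper omits.

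For part (3) you take a different route. The paper argues directly: if $\q(I^{l})_E$ and $\q(I^{r})_E$ are both maximal in $\mathcal{C}$, then by (2) each is contained in $\q(I^{rl})_E$, so maximality forces $\q(I^{l})_E=\q(I^{rl})_E=\q(I^{r})_E$; existence of a maximal element is implicit from the Noetherian hypothesis. You instead build a cofinal ascending chain $\q(I^{n!})_E$, stabilise it by the a.c.c., and then show the stable value dominates every $\q(I^{l})_E$ via $l\mid n!$. Both arguments use exactly the same two ingredients, namely (2) and the Noetherian property of $A$; yours is slightly more constructive in that it exhibits the maximal element explicitly and shows it is actually a maximum, while the paper's is a shade shorter.
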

\begin{proof}
$\rm(1)$. Fix $l \geq 1$. Then $E^{<l>}$ is a finitely generated graded $\Rc(I^l)$-module with $\ell(E^{<l>}_j)$ finite for all $j \in \Z$. So by an argument similar to Corollary 4.4 we have that $\q(I^l)_E$ is $\m$-primary.

$\rm(2)$. Notice
\[
\left(E^{<l>}\right)^{<r>} = E^{<rl>}.
\]
Thus it suffices to prove the result for $l=1$.
Let $a \in \q(I)_E$. Then $a E_j = 0$ for all $j \in \Z$. So we have that $a \in \ann_{\Rc(I^r)} E^{<r>}$. Also as 
$a \in A$ we have that $a \in \q(I^r)_E$.

$\rm(3)$ Suppose $\q(I^l)_E$ and $\q(I^r)_E$ are maximal elements in $\mathcal{C}$. By $\rm(2)$ we have that
\[
\q(I^l)_E \subseteq   \q(I^{rl})_E \quad \text{and} \quad
\q(I^r)_E \subseteq   \q(I^{rl})_E.
\]
By maximality of $\q(I^l)_E$ in $\mathcal{C}$ we have that $\q(I^l)_E =  \q(I^{rl})_E$. Similarly
$\q(I^r)_E =  \q(I^{rl})_E$. So  $\q(I^l)_E = \q(I^r)_E$.
\end{proof}
\begin{question}(with hypotheses as above)
Is  
\[
\q(I)^\infty_E= \q(I^l)_E  \quad  \text{for all} \ l \gg 0?
\]
\end{question}

We now prove the following result:
\begin{theorem}\label{main-2}
Let $(A,\m)$ be a \CM \ local ring with infinite residue field and dimension $d \geq 1$. Let $I$ be an $\m$-primary ideal and let $1 \leq r \leq d$. 
If $\depth G_I(A) < r$ then
$$\bigcap_{n\geq 1}\af_r(I^n) \quad \text{ is $\m$-primary}.$$
\end{theorem}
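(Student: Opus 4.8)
The plan is to reduce the statement to the tools already assembled in the paper, in particular Theorem~\ref{man} (the containment $\af_r(J)\supseteq \q_0(J)\cdots\q_{r-1}(J)$ for any $\m$-primary $J$), Corollary~\ref{loc-CA} (each $\q_i$ is $\m$-primary when the corresponding local cohomology is nonzero), and the Veronese compatibility displayed at the start of Section~6. First I would reduce to the complete case exactly as in the proof of Theorem~\ref{main}: completion commutes with everything in sight, $\Vc_{I^n\wh A}(\bx)=\Vc_{I^n}(\bx)$ since these are finite length, $\af_r(I^n\wh A)\cap A\subseteq \af_r(I^n)$, and $G_{I\wh A}(\wh A)=G_I(A)$ still has depth $<r$; so it suffices to prove $\bigcap_{n\ge 1}\af_r(I^n\wh A)$ is $\wh\m$-primary. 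Hence from now on I assume $A$ complete.

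Next I would apply Theorem~\ref{man} to each power $I^l$: $\af_r(I^l)\supseteq \q_0(I^l)\cdots\q_{r-1}(I^l)$, where $\q_i(I^l)=A\cap\ann_{\Rc(I^l)}H^i_{\Rc(I^l)_+}(L^{I^l}(A))$. The key point is the Veronese identity in the excerpt: $H^i_{\Rc(I^l)_+}(L^{I^l}(A)(-1))\cong\big(H^i_{\R_+}(L^I(A))(-1)\big)^{<l>}$. Now apply Lemma~\ref{ml-powers} with $E=H^i_{\R_+}(L^I(A))(-1)$ (which is a legitimate choice: by Theorem~\ref{locArt}(2) its Matlis dual is Noetherian, hence — as in the proof of Corollary~\ref{loc-CA} — $E$ is a graded $\R$-module with $\ell(E_n)<\infty$ for all $n$ and $E_n=0$ for $n\gg0$; one should remark that Lemma~\ref{ml-powers} only really needs $E$ to be a *-Artinian module with finite-length graded pieces rather than literally finitely generated, or else pass to the Noetherian module $E^\vee$ and dualize). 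Lemma~\ref{ml-powers} then gives that $\q_i(I^l)_E=\q_i(I^{l\cdot(\text{anything})})_E$ once $l$ is chosen to realize the unique maximal element $\q_i(I)^\infty_E$ of the chain; call this element $\q_i^\infty$. Crucially, shifting by $(-1)$ does not change the degree-zero annihilator in $A$, so $\q_i(I^l)$ in the sense of Theorem~\ref{man} agrees with $\q_i(I^l)_E$ up to this harmless twist.

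The heart of the argument is then: there exists $l_0$ such that for every $n$, writing $m=nl_0$ (or more simply replacing the index set $\{I^n\}$ by the cofinal subset $\{I^{nl_0}\}$ — note $\bigcap_n\af_r(I^n)\supseteq\bigcap_n\af_r(I^{nl_0})$ is the wrong direction, so I must instead use that $\{\af_r(I^{nl_0})\}_n$ is \emph{cofinal under reverse inclusion is not automatic either}) — so the correct move is simply to bound the whole intersection below by a single ideal independent of $n$. Indeed, by part (2) of Lemma~\ref{ml-powers}, $\q_i(I)_E\subseteq\q_i(I^n)_E$ for \emph{every} $n\ge1$ and every $i$. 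Therefore $\af_r(I^n)\supseteq\q_0(I^n)\cdots\q_{r-1}(I^n)\supseteq\q_0(I)_E\cdots\q_{r-1}(I)_E=:\q$, with $\q$ independent of $n$, so $\bigcap_{n\ge1}\af_r(I^n)\supseteq\q$. Finally I must check $\q$ is $\m$-primary: by Corollary~\ref{loc-CA}, $\q_i(I)$ is $\m$-primary provided $H^i_{\R_+}(L^I(A))\ne0$ for at least... no — I need each factor nonzero. The hypothesis $\depth G_I(A)<r$ forces $\Vc_I(\bx)\ne0$ for a maximal superficial sequence (by Valabrega-Valla / \cite{HM}), hence by Theorem~\ref{man}'s mechanism $\q_0\cdots\q_{r-1}\ne$ (unit ideal is not the issue); rather, one argues that if all $\q_i(I)$ for $i<r$ equalled $A$ then $\af_r(I)=A$, contradicting that $\af_r(I)$ is $\m$-primary (Theorem~\ref{main}). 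Thus each relevant $\q_i(I)$ is a proper, hence $\m$-primary, ideal, a finite product of $\m$-primary ideals is $\m$-primary, and we are done.

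The main obstacle I anticipate is the bookkeeping around which $\q_i(I)$ can legitimately be taken to be $\m$-primary rather than all of $A$: the clean statement "$\q_0(I)\cdots\q_{r-1}(I)$ is $\m$-primary" needs $H^i_{\R_+}(L)\ne0$ for $i=0,\dots,r-1$, which is \emph{not} guaranteed termwise by $\depth G_I(A)<r$ — only the product's properness is. The fix is to not insist on $\q_i\ne A$ individually: drop the factors equal to $A$, observe at least one surviving factor is $\m$-primary because otherwise $\af_r(I)=A$ contradicts Theorem~\ref{main}, and conclude the product is $\m$-primary. A secondary technical point is justifying that Lemma~\ref{ml-powers} applies to $E=H^i_{\R_+}(L^I(A))(-1)$, which is *-Artinian but a priori not finitely generated; here one dualizes via Matlis duality (legitimate since $A$ is complete) to the Noetherian module $E^\vee$, runs the Lemma there, and notes $\ann_{\R}E=\ann_{\R}E^\vee$.
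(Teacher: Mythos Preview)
Your proposal is correct and follows essentially the same route as the paper: reduce to the complete case via Theorem~\ref{main}, apply Theorem~\ref{man} to each $I^l$, use the Veronese compatibility of $L^I(A)(-1)$ together with Lemma~\ref{ml-powers}(2) to obtain $\q_i(I)\subseteq\q_i(I^l)$ for all $l$, and conclude that $\bigcap_{l\ge1}\af_r(I^l)\supseteq\q_0(I)\cdots\q_{r-1}(I)$. Your worry about Lemma~\ref{ml-powers} requiring a finitely generated module is resolved exactly as you suggest and exactly as the paper does---by passing to the Matlis dual $D_i(1)=\big(H^i_{\R_+}(L^I(A)(-1))\big)^\vee$, which is Noetherian by Theorem~\ref{locArt}(2), and using $\ann_\R E=\ann_\R E^\vee$; the detours in your middle paragraph (cofinality, the maximal element $\q_i^\infty$) are unnecessary, as you yourself realize, since part~(2) of the Lemma already gives the uniform containment $\q_i(I)\subseteq\q_i(I^l)$ directly.
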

\begin{proof}
By Theorem \ref{main}
\[
\af_r(I\widehat{A}) \cap A \subseteq \af_r(I).
\]
Thus $\af_r(I^n\widehat{A}) \cap A \subseteq \af_r(I^n)$ for all $n \geq 1$. Thus it suffices to prove the result when $A$ is complete.
Let $l \geq 1$. For $i = 0,1,\ldots, r-1$, define
$$\q_i(I^l) = \left(\ann_{\Rc(I^l)} H^i_{\Rc(I)_+} (L^{I^l}(A)) \right) \cap A.$$
By Theorem \ref{man}
\[
\af_r(I^l) \supseteq \q_0(I^l)\q_1(I^l)\cdots\q_{r-1}(I^l).
\]
 For $i = 0,1,\ldots, r-1$ set
 $$D_i(l) = H^i_{\Rc(I)_+}\left(L^{I^l}(A)(-1)\right)^\vee.$$
  Note
 that by Matlis duality 
 $$D^i(l)^\vee = H^i_{\Rc(I)_+}\left(L^{I^l}(A)(-1)\right).$$ 
 Clearly
 \[
 \q_i(I^l) = \left(\ann_{\Rc(I^l)} D_i(l)\right)\cap A \quad \text{for} \ i = 0,1,\ldots, r-1.
 \]
 Since $L^I(A)$ and local cohomology behaves well with respect to the Veronese functor we have
 that for all $l \geq 1$ we have
 \[
D_i(l) = D_i(1)^{<l>}  \quad \text{for} \ i = 0,1,\ldots, r-1.
 \]
 By Lemma \ref{ml-powers}(2) we have $\q_i(I^l) \supseteq \q_i(I)$ for all $l \geq 1$ and for all $i = 0,\ldots,r-1$.
Therefore we have 
\[
\af_r(I^l) \supseteq \q_0(I)\q_1(I)\cdots\q_{r-1}(I) \ \text{for all} \ l \geq 1.
\]
It follows that $\bigcap_{n\geq 1}\af_r(I^n) \quad \text{ is $\m$-primary}.$
\end{proof}
We end our paper with the following:
\begin{question}(with hypothesis as above)
Is $\af_r(I^n)$ constant for all $n \gg 0$?
\end{question}
\providecommand{\bysame}{\leavevmode\hbox to3em{\hrulefill}\thinspace}
\providecommand{\MR}{\relax\ifhmode\unskip\space\fi MR }
\providecommand{\MRhref}[2]{%
  \href{http://www.ams.org/mathscinet-getitem?mr=#1}{#2}
}
\providecommand{\href}[2]{#2}

\end{document}